\newtheorem{thm}{Theorem}[section]
\newtheorem{prop}[thm]{Proposition}
\newtheorem{cor}[thm]{Corollary}
\newtheorem{lem}[thm]{Lemma}
\theoremstyle{definition}
\newtheorem{dfn}[thm]{Definition}
\newtheorem{ex}[thm]{Example}
\newtheorem{rem}[thm]{Remark}
\newtheorem{rems}[thm]{Remarks}
\newcommand{\ds}{\displaystyle}
\newcommand{\field}[1]{\mathbb{#1}}
\newcommand{\Q}{\field{Q}}
\newcommand{\C}{\field{C}}
\newcommand{\N}{\field{N}}
\newcommand{\Z}{\field{Z}}
\newcommand{\F}{\field{F}}
\newcommand{\G}{\Gamma}
\renewcommand{\L}{\Lambda}
\DeclareMathOperator{\Spec}{Spec}
\DeclareMathOperator{\rank}{rank}
\DeclareMathOperator{\car}{char}
\DeclareMathOperator{\Gal}{Gal}
\DeclareMathOperator{\Hom}{Hom}
\DeclareMathOperator{\Stab}{Stab}
\DeclareMathOperator{\Ker}{Ker}
\DeclareMathOperator{\Sel}{Sel}
\DeclareMathOperator{\Fr}{Fr}
\newcommand{\im}{\text{Im}}
\newcommand{\iri}{\hookrightarrow}
\newcommand{\liminv}{\varprojlim}
\newcommand{\calf}{\mathcal F}
\newcommand{\cals}{\mathcal S}
\newcommand{\g}{\gamma}
\newcommand{\gotm}{\mathfrak m}
\newcommand{\gotp}{\mathfrak p}
\newcommand{\gotq}{\mathfrak q}
\renewcommand{\aa}{\mathbf{a}}
\newcommand{\bb}{\mathbf{b}}
\newcommand{\pp}{\mathbf{p}}
\newcommand{\bmu}{\boldsymbol\mu}
\newcommand{\ov}{\overline}
\newfont{\cyr}{wncyr10 scaled 1000}
\newcommand{\sha}{\mbox{{\cyr{X}}}}
\begin{document}
\title[{The algebra $\Z_\ell[[\Z_{\lowercase{p}}^{\lowercase{d}}]]$} and Iwasawa theory]{The algebra $\Z_\ell[[\Z_p^d]]$ and applications to Iwasawa theory}

\author[A. Bandini] {Andrea Bandini}
\address{Andrea Bandini \newline
\indent Dipartimento di Matematica, Universit\`a degli Studi di Pisa \newline
\indent Largo Bruno Pontecorvo 5 \newline
\indent 56127 Pisa, Italy}
\email{andrea.bandini@unipi.it}

\author[I. Longhi]{Ignazio Longhi}
\address{Ignazio Longhi \newline
\indent Dipartimento di Matematica, Universit\`a di Torino \newline
\indent via Carlo Alberto 10 \newline
\indent 10123 Torino, Italy}
\email{ignazio.longhi@unito.it}

\begin{abstract} Let $\ell$ and $p$ be distinct primes, and let $\G$ be an abelian 
pro-$p$-group. We study the structure of the algebra $\L:=\Z_\ell[[\G]]$ and of 
$\L$-modules. The algebra $\L$ turns out to be a direct product of copies of
ring of integers of cyclotomic extensions of $\Q_\ell$ and this induces a similar 
decomposition for a family of $\L$-modules.  Inside this family we
define Sinnott modules and provide characteristic ideals and formulas \`a la Iwasawa 
for orders and ranks of their quotients. 
When $\G\simeq \Z_p^d$\, is the Galois group of an 
extension of global fields, $\ell$-class groups and (duals of) $\ell$-Selmer groups 
provide examples of Sinnott modules and our formulas vastly extend
results of L.~Washington and W.~Sinnott on $\ell$-class groups in 
$\Z_p$-extensions. 

Moreover, for global function fields of positive characteristic we
use the specialization of a Stickelberger series to define an element 
in $\L$ which interpolates special values of Artin $L$-functions. 
With this element and the characteristic ideal of  $\ell$-class groups we formulate an Iwasawa Main Conjecture for this setting and prove 
some special cases of it for relevant $\Z_p$-extensions. 
\end{abstract}

\keywords{structure of Iwasawa algebras, $\ell\neq p$ Iwasawa theory, Iwasawa Main Conjecture for global function fields}

\subjclass[2010]{Primary: 11R23. Secondary: 11R29, 11R59, 13C05}

\maketitle

\section{Introduction}
Fix a prime $p$ and let $k$ be a global field. Let $K/k$ be a 
Galois extension with Galois group $G\simeq \Z_p$, and let $\Z_p[[G]]$ be the 
associated Iwasawa algebra, which is (non-canonically) isomorphic to the ring of power 
series $\Z_p[[t]]$. 
Classical Iwasawa theory studies $p$-parts of $G$-modules (hence $\Z_p[[G]]$-modules) associated with the 
finite subextensions $k_n$ of $K$. The first examples are class groups and Selmer groups, but the theory and main results have been vastly generalized
to other arithmetically significant modules and to general $p$-adic Lie extensions
(for a comprehensive survey in the classical setting with a rich 
bibliography see \cite{GrI}). 

\subsection{Classical Iwasawa theory}
One of the main features of Iwasawa theory is its interplay between algebraic and 
analytic methods, leading to various formulations of the Main Conjecture. As an example 
we can consider $p$-parts of class groups in a $\Z_p$-extension. Let $X_p(k_n)$ be the 
$p$-part of the class group of $k_n$, and let $X_p(K):= \displaystyle{\liminv 
X_p(k_n)}$ be the inverse limit with respect to the norm maps, which we refer to as the
$p$-part of the class group of $K$. Then, $X_p(K)$ is a finitely generated torsion module
over $\Z_p[[G]]\simeq \Z_p[[t]]$ and, using the structure theorem for such modules, it 
is possible to associate with $X_p(K)$ a polynomial $f_X(K,t)\in \Z_p[[t]]$
(a generator of the so called {\em characteristic ideal} of $X_p(K)$).
Let $\mu$ be the maximal power of $p$ dividing $f_X(K,t)$, and let
 $\lambda=\deg_t f_X(K,t)$. Then, we have the celebrated Iwasawa formula 
\begin{equation} \label{e:iwswfrm} 
v_p (|X_p(k_n)|)= \mu p^n +\lambda n + \nu \quad \text{ for all } n\gg 0,\end{equation} 
where $v_p$ is the usual normalized $p$-adic valuation with $v_p(p)=1$.

On the analytic side, one can find the same arithmetic information using the class number 
formula, i.e., computing special values of complex Dedekind $\zeta$-functions or of products 
of complex Artin $L$-functions. Interpolating several of those special values with respect to 
a $p$-adic metric, Iwasawa provided a $p$-adic $L$-function $L_p(K,s)$ ($s$ a 
$p$-adic variable), which can also be viewed as an element $f_p(K,t)$ in $\Z_p[[t]]$.

Therefore, we have two elements in the Iwasawa algebra $\Z_p[[t]]$, which are 
constructed with completely unrelated techniques, and, nevertheless, provide the same 
arithmetic information, that is, the cardinality of all the $X_p(k_n)$. Roughly speaking, the Iwasawa Main Conjecture predicts 
explicit relations between (the $\chi$-parts of) the two ideals $(f_X(K,t))$ and $(f_p(K,t))$
in certain specific $\Z_p$-extensions like the cyclotomic ones (the $\chi$ are characters associated with the arithmetic setting).

\subsection{The $\ell$-part of Iwasawa modules}
Regarding the $\ell$-parts of class groups in $\Z_p$-extensions, with $\ell$ a prime different from $p$, for quite some time
results have been basically limited to the two foundational papers \cite{Wa75} and 
\cite{Wa78} by L.~Washington, in which he proved (among other things):
\begin{itemize}
\item bounds on the orders and $\ell$-ranks\footnote{We recall that 
the $\ell$-rank of a finitely generated $\Z_\ell$-module $M$ is 
$\rank_\ell M:=\dim_{\F_\ell}M/\ell M$.} of $\ell$-parts of class groups,
\cite[Theorem 1 and Proposition 1]{Wa75};
\item the existence of (non-cyclotomic) $\Z_p$-extensions with unbounded 
$\ell$-parts of 
class groups, \cite[Section VI]{Wa75};
\item the finiteness of the $\ell$-part of the inverse limit of 
class groups in the $\Z_p$-cyclotomic extension of an abelian number 
field, \cite[Theorem]{Wa78}.
\end{itemize}
Moreover, in a talk at the Iwasawa Conference (MSRI 1985), W.~Sinnott
presented some statements on the existence of the $p$-adic limit of 
the orders of $\ell$-class groups in a $\Z_p$-extension. A proof and some 
generalizations of Sinnott's Theorem are provided in \cite{Ki}.
\medskip

This topic has recently attracted new attention with the works of 
D.~Kundu and A.~Lei on bounds for the $\ell$-part of Selmer groups 
in anticyclotomic $\Z_p$-extensions (\cite{KL}), of A.~Lei and
D.~Valli\`eres on Iwasawa theory for graphs (\cite{LV}), 
and with a new proof by M.~Ozaki of Sinnott's Theorem, which extends it 
to the non-abelian case (\cite{Oz}). Basically, most of these 
results rely on the analytic side of Iwasawa theory, and the available 
proofs use class number formulas, complex and $p$-adic $L$-functions, 
$p$-adic interpolation, and so on. 
Because of this, some of them are limited to specific settings in which 
$L$-functions are already known to exist 
(cyclotomic and anticyclotomic $\Z_p$-extensions mostly).
\medskip

L.~Washington also remarked that to study $\ell$-parts of Iwasawa 
modules ``we would have to work with $\Z_\ell[[\Z_p]]$, which does not 
have as nice a structure'' compared to $\Z_p[[\Z_p]]\simeq \Z_p[[t]]$  
(last line on \cite[page 87]{Wa78}).
At present, there does not seem to be a well developed algebraic theory 
for this setting:
we aim at filling this gap by providing a description of the Iwasawa algebra 
$\Lambda:=\Z_\ell[[\G]]$ for any abelian pro-$p$-group $\G$, and of certain
$\L$-modules.

\subsection{$\Lambda$-modules}
Let $\G^\vee$ be the Pontrjagin dual of $\G$. The natural action of 
the absolute Galois group of $\Q_\ell$ on $\G^\vee$ yields a set of orbits 
$\G^\vee/G_{\Q_\ell}$ that we denote by $\mathscr{R}$. 
 In Section
\ref{s:IwaAlg} we shall prove the following structure theorem.

\begin{thm}[Theorem \ref{t:struttura1}]\label{t:Intro1}
\[ \L\simeq \prod_{[\omega]\in \mathscr{R}} \Z_\ell[\omega(\G)]
\simeq \prod_{n\geqslant 0} 
\prod_{\begin{subarray}{c} [\omega]\in \mathscr{R}\\
\omega(\G)=\boldsymbol{\mu}_{p^n}\end{subarray}} 
\Z_\ell[\boldsymbol{\mu}_{p^n}].\]
\end{thm}

The proof is based on a rather simple idea, namely using the fact that 
$p$ is a unit in $\Z_\ell$ to construct idempotents of $\L$ yielding 
this decomposition. \medskip

The structure theory of $\L$-modules is a straightforward consequence 
of Theorem \ref{t:Intro1}.
In particular, one can define the characteristic ideal of a locally 
finitely generated $\L$-module (Section \ref{SecCharId}). 

\noindent Moreover, under some natural conditions 
(Section \ref{SecSinMod2}), we obtain a decomposition 
\[ M\simeq \prod_{n\geqslant 0} 
\bigoplus_{\begin{subarray}{c} [\omega]\in \mathscr{R} \\
\omega(\G)=\boldsymbol{\mu}_{p^n}\end{subarray}} M_{[\omega]},\]
where all $M_{[\omega]}$ are $\Z_\ell[\boldsymbol{\mu}_{p^n}]$-modules.
This immediately leads to rather explicit formulas for $\Z_\ell$-ranks,
$\ell$-ranks and (whenever feasible) orders of quotients of locally
finitely generated or locally torsion $I_\bullet$-complete $\L$-modules
(Theorem \ref{t:AdmMod1&2}). 

We call these modules {\em Sinnott modules}, because they verify 
a generalization of Sinnott's Theorem mentioned above (Theorem 
\ref{t:GenSinn}). In Section \ref{s:NorSys} we show that a natural 
way to obtain $I_\bullet$-complete modules, and hence Sinnott modules, 
is via normic systems.
\medskip

The applications we offer are to $\G=\Z_p^d$-extensions $\calf/F$ of a global 
field $F$, taking as Iwasawa modules either $\ell$-class groups or 
$\ell$-Selmer groups of an abelian variety. We obtain
formulas for $\ell$-ranks and orders of $\ell$-class groups $X_n$ and for 
$\Z_\ell$-ranks and $\ell$-ranks 
of Pontrjagin duals $\mathcal{S}_n$ of $\ell$-Selmer groups associated 
with the finite subextensions $F_n=\calf^{\G^{p^n}}$  
of $\calf$ (see Section \ref{s:ArAppl} for the precise definitions of 
$X_n$ and $\mathcal{S}_n$ in the number field and function field setting).

\begin{thm}\label{t:Intro2}
Let $\ell$ and $p$ be distinct primes, and let $\calf/F$ be a $\Z_p^d$-extension of global fields. Let $Z_n$ be either $X_n$ or $\mathcal{S}_n$ as above, and 
let $Z=\varprojlim Z_n$.
Then:\begin{enumerate}[{\rm 1.}]
\item $Z$ is a Sinnott module.
\item Let $\Phi_\ell(Z_n)$ be $v_\ell(|Z_n|)$ (when $Z_n$ is finite), or 
$\rank_{\Z_\ell} Z_n$ or $\rank_\ell Z_n$
(when $Z_n$ is finitely generated over $\Z_\ell$).
Let also $f_m:=[\Q_\ell(\boldsymbol{\mu}_{p^m}):\Q_\ell]$.
Then, for all $m \geqslant 0$, there exist non-negative integers 
$c_m=c_m(Z)$ such that 
\[ \Phi_\ell (Z_n)  = c_0 + \sum_{m=1}^n c_mf_m \quad
\text{for all }n\geqslant 0\,.\]
\item The sequences
\[ \{ \rank_{\Z_\ell} Z_n\}_{n\in\N},\ \  \{ \rank_\ell Z_n\}_{n\in\N}\ 
\text{ and }\ \left\{ \ell^{v_\ell(|Z_n|)}\right\}_{n\in \N} \]
(when well defined) converge p-adically. 
\end{enumerate}
\end{thm}

These formulas will directly follow from the ones in Theorem
\ref{t:AdmMod1&2} (with the adjustment explained in Remark \ref{r:RnSn}),
which provide an alternative and more general approach to some of the 
results mentioned before (Section \ref{SecApp1}).  

\begin{rem} In order to have results on the behaviour of class groups or
Selmer groups in Iwasawa towers, usually one inserts some restrictions on 
the ramification locus of $\calf/F$, which is usually  assumed to 
be finite (a non-trivial hypothesis if $\car(F)=p$) and, in the case of 
the Selmer group of an abelian variety, one also asks for some 
conditions on the reduction at ramified places. Thus it is 
worth observing that our theorems are independent on any such 
restrictions. On the other hand, they will resurface in the analytic 
side of the theory and its arithmetic applications (Section \ref{s:StickEl}). 
\end{rem}

\subsection{Looking for a Main Conjecture}
Having developed an algebraic theory of $\L$-modules, the next natural step is to define $\ell$-adic
$L$-functions in $\L$ and compare it with generators of relevant characteristic ideals.

In Section \ref{s:StickEl} we focus on the concrete example of $\ell$-class groups for 
$\Z_p^d$-extensions $\calf/F$ of global function fields, with $d\in \N$.
Our approach is based on the Stickelberger series $\Theta_{\calf/F,S}(u)$ defined in \cite[Section 3.1]{ABBL}, where $S$ is a non-empty finite set of places containing the ramified ones.
This series interpolates the values of Artin $L$-functions and provides 
the $p$-adic $L$-function used in \cite{ABBL}  to prove the Iwasawa Main Conjecture
for the $p$-part of the class group of a Carlitz cyclotomic extension of $\F_q(t)$.

Special values of Artin $L$-functions give the order of the whole class group, hence the above
interpolation property can be used for $\ell$-parts as well. We use a (slightly) modified version $\Theta_{\calf/F,S,v_0}(u)$ ($v_0$
a place outside $S$) of the Stickelberger series 
to define an element $\theta_{\calf/F,S,v_0}\in\Lambda$. However, the structure of $\Lambda$ (Theorem \ref{t:Intro1}) forces us to take as 
its $[\omega]$-component a product of $\chi$-factors 
$\chi(\Theta_{\calf/F,S,v_0})(1)$, as $\chi$ varies in $[\omega]$ 
(Definition \ref{d:ModStickEl}; for more details on the motivation 
behind this technical step see Section \ref{ss:ExtConstMC}).
 
With this Stickelberger element and some technical (but not restrictive) hypotheses,
we are able to prove a weak version of the Main Conjecture (Theorem \ref{t:WMC})
and some instances of the Main Conjecture for $\Z_p$-extensions (Theorem \ref{t:MCArith}).
As an example, in the case of the arithmetic 
$\Z_p$-extension $F^{ar}/F$ we obtain the following.

\begin{thm}[Corollary \ref{co:IMCZpArith}]\label{t:IntroMC}
Let $X$ be the inverse limit (with respect to norms) of the $\ell$-parts of class groups in the arithmetic tower $F^{ar}/F$.
Assume $\ell$ is a generator of $(\Z/p\Z)^*$ and $v_p(\ell^{p-1}-1)=1$.
Then there exist $\ov{n}\in \N$, such that for all $[\omega]\in\mathscr R-\mathscr{R}_{\ov{n}}$ 
we have
\[ {\rm Ch}_{\L_{[\omega]}}(X_{[\omega]})= \left((\theta_{F^{ar}/F,S,v_0})_{[\omega]}\right).\]
\end{thm}

\noindent
This provides evidence for the formulation of a Main Conjecture at least
for class groups of global function fields (Section \ref{sss:2stck}). Moreover, since in the function field case the $\ell$-adic $L$-function
has a unique source both for $\ell\neq p$ and $\ell=p$, it also suggests the 
possibility of looking for some unified treatment of all the $\ell$-parts
of Iwasawa modules. 
For now this, and the problem of formulating a Main Conjecture for other
arithmetically significant $\L$-modules, are just speculations. 
They could be interesting topics for future research.

\bigskip
At the end of this introduction we would like to mention that different
Iwasawa algebras have been recently considered by other authors. 
In particular D.~Burns and A.~Daoud, in \cite{BD}, study the structure 
of modules over the algebra $\Z[[\Z_p]]$ and, together with 
D.~Liang, they also study $I_\bullet$-complete modules over other 
non-noetherian Iwasawa algebras (see \cite{BDL}, where they generalize 
the results for $\Z_p^\infty$-extensions of function fields obtained 
in \cite{ABBL} and \cite{BBLNY}). We thank Prof. Burns for showing us early 
versions of these papers. 

We believe that our methods can also be used for non-abelian extensions (as the ones studied in \cite{Oz}).

\section{The Iwasawa algebra}\label{s:IwaAlg}
For an abelian group $A$, we let $A^\vee$ denote its Pontryagin dual
\[ A^\vee:=\Hom_{cont}(A,\{z\in\C:|z|=1\}).\] 
Since we will only work with (infinite) profinite groups, we can 
identify $A^\vee$ with 
$\Hom_{cont}(A,\bmu)$, where $\bmu\subset\ov\Q$ is 
the set of roots of unity, endowed with the discrete topology. The action of 
$G_\Q:=\Gal(\ov\Q/\Q)$ on $\bmu$ induces an action on $A^\vee$
defined by $(\sigma\cdot\omega)(a)=\sigma(\omega(a))$.

We fix, once and for all, a prime $\ell$ and an embedding 
$\ov\Q\iri \ov\Q_\ell $, which allows us to think of maps in $A^\vee$ as 
taking values in $\ov\Q_\ell$. It also induces an embedding 
$G_{\Q_\ell}:=\Gal(\ov\Q_\ell/\Q_\ell)\iri G_\Q$, hence an action of
$G_{\Q_\ell}$ on $A^\vee$.

\subsection{Structure theorem} Fix another prime $p\neq \ell$, and let
\[ \G=\varprojlim \G_n \] 
be a profinite abelian $p$-group. We set
\[ \Lambda:=\Z_\ell[[\G]]=\varprojlim \Z_\ell\left[\G_n\right] ,\]
and refer to it as the {\em $\ell$-adic Iwasawa algebra} (or simply the 
{\em Iwasawa algebra}) of $\G$. Each $\Z_\ell[\Gamma_n]$ has a natural
topology as a free $\Z_\ell$-module of finite rank and this induces 
a topological ring structure on $\L$.
\medskip

For any $m\geqslant n$, let $\pi_n^m:\G_m\twoheadrightarrow\G_n$ (respectively, 
$\pi_n:\G\twoheadrightarrow\G_n$) denote the maps defining (respectively,
arising from) the inverse limit. In the arithmetic applications we have in 
mind, $\G$ will always be a Galois group. We can assume that all maps 
$\pi^m_n$ are surjective and usually call them projections. 
As a consequence, the projections $\pi_n$ are also surjective.

Dualizing, the projections $\pi^m_n$ and $\pi_n$ induce natural 
inclusion homomorphisms 
$\G_n^\vee\hookrightarrow\G_m^\vee$ and $\G_n^\vee\hookrightarrow\G^\vee$, 
which allow us to think of $\G^\vee$ as $\bigcup_n\G_n^\vee$. 

By abuse of notation, we shall also use  $\pi_n^m$ and $\pi_n$ to denote 
the maps induced on the respective group algebras. In the same way, if 
$\omega$ is a character in $\G^\vee_n\subseteq\G^\vee$, we shall write 
$\omega$ for both ring homomorphisms $\Z_\ell[\G_n]\rightarrow\ov\Q_\ell$ and 
$\L\rightarrow\ov\Q_\ell$ obtained extending $\omega$.

Moreover, we write $\mathscr{R}:=\G^\vee/G_{\Q_\ell}$ (respectively, 
$\mathscr{R}_n:=\G_n^\vee/G_{\Q_\ell}$) for the set of orbits under the 
action of $G_{\Q_\ell}$ on $\G^\vee$ (respectively, on $\G_n^\vee$).
We remark that the image $\omega(\Gamma)$ is independent of the choice of the 
representative in the orbit $[\omega]$, i.e., for any $[\omega]\in \mathscr{R}$ and any 
$\chi \in [\omega]$, we have $\omega(\G)=\chi(\G)$.

\begin{thm} \label{t:struttura1} 
We have
\begin{equation} \label{e:struttura} 
\Lambda=\prod_{[\omega]\in\mathscr{R}} \L_{[\omega]} ,
\end{equation}
where $\L_{[\omega]}\subseteq\Lambda$ is a ring (non-canonically) isomorphic to $\Z_\ell[\omega(\Gamma)]$.
\end{thm}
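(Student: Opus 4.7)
The plan is to exploit the fact that $p\in\Z_\ell^\times$ to construct, at each finite level $\G_n$, a complete orthogonal family of idempotents indexed by $\mathscr{R}_n$, and then pass to the inverse limit. For each $\omega\in\G_n^\vee$, the standard element
\[ e_\omega=\frac{1}{|\G_n|}\sum_{g\in\G_n}\omega(g^{-1})g \]
lies in $\ov\Q_\ell[\G_n]$ and satisfies the usual identities $e_\omega e_\chi=\delta_{\omega,\chi}e_\omega$, $\sum_\omega e_\omega=1$, and $g\cdot e_\omega=\omega(g)e_\omega$. I would then average over each Galois orbit, setting
\[ e_{[\omega]}:=\sum_{\chi\in[\omega]}e_\chi, \]
and show that $e_{[\omega]}\in\Z_\ell[\G_n]$: the coefficient of $g$ equals $\frac{1}{|\G_n|}\sum_{\chi\in[\omega]}\chi(g^{-1})$, which is a $G_{\Q_\ell}$-invariant sum of $p$-power roots of unity, hence lies in $\ov\Z_\ell^{G_{\Q_\ell}}=\Z_\ell$, and the division by $|\G_n|\in\Z_\ell^\times$ preserves integrality. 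Orthogonality and completeness of $\{e_{[\omega]}\}_{[\omega]\in\mathscr{R}_n}$ follow from those of the $e_\omega$, yielding a ring decomposition
\[ \Z_\ell[\G_n]=\prod_{[\omega]\in\mathscr{R}_n}\Z_\ell[\G_n]\,e_{[\omega]}. \]

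To identify the $[\omega]$-factor with $\Z_\ell[\omega(\G_n)]$, I would fix a representative $\omega$ of the orbit and consider the ring homomorphism $\Z_\ell[\G_n]\to\ov\Q_\ell$ obtained by extending $\omega$ linearly; its image is $\Z_\ell[\omega(\G_n)]$. By character orthogonality this map vanishes on $e_{[\chi]}$ for every $[\chi]\ne[\omega]$, so it factors through the direct factor $\Z_\ell[\G_n]\,e_{[\omega]}$ and induces a surjection onto $\Z_\ell[\omega(\G_n)]$. Since $\Z_\ell[\G_n]\,e_{[\omega]}$ is a direct summand of the free $\Z_\ell$-module $\Z_\ell[\G_n]$, it is itself free, and a rank count closes the argument: each $\Z_\ell[\omega(\G_n)]$ has $\Z_\ell$-rank $[\Q_\ell(\omega(\G_n)):\Q_\ell]=|[\omega]|$, and $\sum_{[\omega]\in\mathscr{R}_n}|[\omega]|=|\G_n^\vee|=|\G_n|$, which equals the total rank, so the surjection must be an isomorphism. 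The dependence on the choice of representative $\omega$ accounts for the ``non-canonical'' in the statement.

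For the passage to the inverse limit, a direct computation shows that $\pi_n^m(e_\chi)=e_{\chi'}$ whenever $\chi\in\G_m^\vee$ is the lift of some $\chi'\in\G_n^\vee$ (and $\pi_n^m(e_\chi)=0$ otherwise), so summing over orbits the map $\pi_n^m$ either sends $e_{[\chi]}$ to $e_{[\chi']}$ isomorphically on the corresponding component or annihilates it. Since $\G^\vee=\bigcup_n\G_n^\vee$ and $\mathscr{R}=\bigcup_n\mathscr{R}_n$, each orbit $[\omega]\in\mathscr{R}$ eventually appears at some level $n_0([\omega])$, and from that point on $\omega(\G_n)=\omega(\G)$ and the transition maps between the $[\omega]$-components are isomorphisms. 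Taking $\varprojlim_n$ then gives
\[ \L=\varprojlim_n\prod_{[\omega]\in\mathscr{R}_n}\Z_\ell[\G_n]\,e_{[\omega]}\simeq\prod_{[\omega]\in\mathscr{R}}\Z_\ell[\omega(\G)]. \]
I expect the most delicate step to be the integrality of $e_{[\omega]}$ together with the identification of the $[\omega]$-component at finite level, as both constructions are visible a priori only after extending scalars to $\ov\Q_\ell$; the argument hinges on the interplay of $p\in\Z_\ell^\times$, Galois invariance, and $\ov\Z_\ell^{G_{\Q_\ell}}=\Z_\ell$. Commuting $\varprojlim_n$ with the infinite product over $\mathscr{R}$ is comparatively routine once it is established that, on each individual orbit, the transition maps stabilize to isomorphisms.
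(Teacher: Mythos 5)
Your proposal is correct and follows essentially the same path as the paper: averaging the standard idempotents $e_\omega\in\ov\Q_\ell[\G_n]$ over $G_{\Q_\ell}$-orbits (using $p\in\Z_\ell^\times$ and $\ov\Z_\ell^{G_{\Q_\ell}}=\Z_\ell$ for integrality), checking compatibility with the projections $\pi_n^m$, and identifying $\L_{[\omega]}$ with $\Z_\ell[\omega(\G)]$ by a surjection-plus-rank-count. The only minor divergence is in the rank count: you deduce $\rank_{\Z_\ell}(\Z_\ell[\G_n]\,e_{[\omega]})=|[\omega]|$ from the global identity $\sum_{[\omega]}|[\omega]|=|\G_n|$, whereas the paper computes it directly by tensoring with $\ov\Z_\ell$ and counting isotypic components (and separately verifies $\Stab(G_{\Q_\ell},\omega)=\Gal(\ov\Q_\ell/\Q_\ell(\omega(\G)))$); both are fine.
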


\begin{proof} Let $\ov{\Z}_\ell$ be the integral closure of $\Z_\ell$ into $\ov{\Q}_\ell$. 
The group $G_{\Q_\ell}$ acts on the coefficients of 
$\ov{\Z}_\ell[\G_n]$ inducing a $\Z[[G_{\Q_\ell}]]$-module structure. 

\noindent
Obviously, $\Z_\ell[\G_n]=\ov{\Z}_\ell[\Gamma_n]^{G_{\Q_\ell}}$.

For any $\omega\in\Gamma^\vee=\bigcup_n \G_n^\vee$, 
we define $e_{\omega,n}\in\ov\Z_\ell[\Gamma_n]$ by
\[ e_{\omega,n}:=\left\{ \begin{array}{ll} \displaystyle\frac{1}{|\Gamma_n|}
\sum_{\gamma\in\Gamma_n}\omega(\gamma^{-1})\gamma & 
\text{ if }\omega\in\Gamma_n^\vee,\\ 
\ & \\
0 & \text{ otherwise},\end{array} \right.\]
and set
\[ e_{[\omega],n}:=\sum_{\chi\in[\omega]}e_{\chi,n} . \]
Then, $e_{[\omega],n}\in\Z_\ell[\Gamma_n]$, because it is Galois invariant. 
A standard computation shows that $\{e_{\omega,n}:\omega\in\Gamma_n^\vee\}$ is a set 
of orthogonal idempotents with sum $1$, corresponding to the decomposition of 
$\ov\Z_\ell[\Gamma_n]$ in isotypic components. Therefore, the relations 
\begin{equation}\label{EqIdempn} 
\sum_{[\omega]\in\mathscr{R}_n}e_{[\omega],n}=1\,\ \text{ and }\,\ 
e_{[\omega],n}\cdot e_{[\chi],n}=\left\{\begin{array}{ll} e_{[\omega],n} & 
\text{ if } [\omega]=[\chi] ,\\ 
\ & \\
0 &  \text{ if } [\omega]\neq[\chi] \end{array} \right. 
\end{equation}
hold in $\Z_\ell[\G_n]$ for all $n$, and all $[\omega], [\chi]\in \mathscr{R}_n$. 

\noindent Moreover, if $\omega\in\Gamma_m^\vee$ and $m\geqslant n$, we have
\begin{align*} 
\pi_n^m(e_{\omega,m}) & =  
\pi_n^m\left(\frac{1}{|\Gamma_m|}\,
\sum_{\gamma\in\Gamma_m}\omega(\gamma^{-1})\gamma\right)
 = \frac{1}{|\Gamma_m|}\,
\sum_{\gamma\in\Gamma_m}\omega(\gamma^{-1})\pi_n^m(\gamma)\\
& = \frac{1}{|\Gamma_m|}\,
\sum_{\delta\in\Gamma_n}\left(\sum_{\gamma\in(\pi_n^m)^{-1}(\delta)}
\omega(\gamma^{-1})\right)\delta = e_{\omega,n}. 
\end{align*}
To prove the last equality, just write $(\pi_n^m)^{-1}(\delta)=
\gamma_\delta\Ker(\pi_n^m)$ 
for some $\gamma_\delta\in\Gamma_m$ such that $\pi^m_n(\g_\delta)=\delta$.
This leads to
\begin{align*} 
\sum_{\gamma\in(\pi_n^m)^{-1}(\delta)}\omega(\gamma^{-1}) & =
\sum_{\eta\in\Ker(\pi_n^m)} \omega(\gamma_\delta^{-1})\,\omega(\eta^{-1}) \\
\ & =\left\{ \begin{array}{ll} |\Ker(\pi_n^m)|\,\omega(\g_\delta^{-1}) = 
\displaystyle{\frac{|\G_m|}{|\G_n|}\,\omega(\delta^{-1})} & 
\text{ if }\omega\in\Gamma_n^\vee, \\ 
\ & \\
0 & \text{ otherwise}.   \end{array} \right.
\end{align*}
Thus, for any $[\omega]\in \mathscr{R}$, we obtain a coherent sequence
\[ e_{[\omega]}:=\left(e_{[\omega],n}\right)_n \in 
\varprojlim \Z_\ell[\G_n]=\Lambda \]
such that $\pi_n(e_{[\omega]})=e_{[\omega],n}$ for every $n$.

Note that the infinite sum $\sum_{[\omega]}e_{[\omega]}$ converges to 1, 
because $\pi_n(e_{[\omega]})=0$ for all $n$ such that $[\omega]\not\in\mathscr{R}_n$, and 
thus, the generic summand tends to $0$ in $\L$. Hence one can take limits in 
\eqref{EqIdempn} to obtain analogous relations in $\L$. Therefore, the (numerable) set 
$\{e_{[\omega]} : [\omega]\in \mathscr{R}\}$ consists of orthogonal idempotents 
with sum 1 in $\L$.

\noindent
The decomposition \eqref{e:struttura} immediately follows by putting 
\[ \L_{[\omega]}:=e_{[\omega]}\L .\]

As for the last statement, fix a representative $\omega$ for an orbit 
$[\omega]\in \mathscr{R}$. Extending by linearity $\omega:\G\rightarrow\ov\Q_\ell^{\,*}$ 
to a ring homomorphism $\omega:\L\rightarrow\ov\Q_\ell$, we have 
$\omega(\L)=\Z_\ell[\omega(\G)]$. Besides, orthogonality of characters implies
$\omega(e_{\chi,n})=0$ if $\chi,\omega\in\G^\vee$ are distinct, yielding
$\omega(\L_{[\chi]})=0$ for every $[\chi]\neq[\omega]$. 
This establishes a surjective homomorphism
\begin{equation} \label{e:omgism} 
\omega\colon\L_{[\omega]}\longrightarrow\Z_\ell[\omega(\G)] , 
\end{equation} 
which is both a ring and a $\Z_\ell$-module homomorphism.

Since $\omega\in\G_n^\vee$ for some $n$, we have 
\[ \L_{[\omega]}=e_{[\omega]}\L\simeq e_{[\omega],n}\Z_\ell[\G_n]\subseteq\Z_\ell[\G_n].\]
Hence, $\L_{[\omega]}$ is a free $\Z_\ell$-module of finite rank. 
The same holds for $\Z_\ell[\omega(\G_n)]$, and, in order to prove that \eqref{e:omgism} is an 
isomorphism, it suffices to check $\Z_\ell$-ranks of both sides. 

Let $\Stab(G_{\Q_\ell},\omega)$ be the stabilizer of 
$\omega$ in $G_{\Q_\ell}$. Obviously, $\Stab(G_{\Q_\ell},\omega)$ 
is contained in $\Gal(\ov\Q_\ell/\Q_\ell(\omega(\G)))$.  
Conversely, $\tau\in \Gal(\ov\Q_\ell/\Q_\ell(\omega(\G)))$ yields 
$\tau(\omega(\g))=\omega(\g)$ for all $\g\in \G$, i.e., 
$\tau\omega=\omega$ and $\tau\in \Stab(G_{\Q_\ell},\omega)$.  Therefore 
\[\Stab(G_{\Q_\ell},\omega) = \Gal(\ov\Q_\ell/\Q_\ell(\omega(\G))).\]
In particular the stabilizer is always a normal subgroup of $G_{\Q_\ell}$.
Thus,
\[ \rank_{\Z_\ell}\Z_\ell[\omega(\G)]=[\Q_\ell(\omega(\G)):\Q_\ell]=
\left| G_{\Q_\ell} / \Stab(G_{\Q_\ell},\omega) \right|= 
\left| [\omega]\right| . \]
We conclude by noting that
\[ \rank_{\Z_\ell}\L_{[\omega]}=
\rank_{\ov\Z_\ell}(\L_{[\omega]}\otimes\ov\Z_\ell)=
\rank_{\ov\Z_\ell}\bigoplus_{\chi\in[\omega]}e_{\chi,n}\ov\Z_\ell[\G_n]
=\left|[\omega]\right| ,\]
where the last equality follows from the fact that isotypic components 
of the regular representation of a finite abelian group have rank $1$.
\end{proof}

\begin{rems} \label{RemLambdaCompact} \  
\begin{enumerate}[{\bf 1.}]
\item The right-hand side of \eqref{e:struttura} is an {\em inner} direct
product, since each ring $\L_{[\omega]}$ is a submodule of $\L$. Note that 
it could also be written as $\sum_{[\omega]}\L_{[\omega]}$, 
since, as we observed in the proof, the $e_{[\omega]}$ converge to $0$ in $\L$. 
We shall choose the more convenient notation among 
$x=(x_{[\omega]})_{[\omega]}$ and 
$x=\sum_{[\omega]}e_{[\omega]}x$ for $x\in\L$ according to the situation.

\smallskip
\noindent
\item The topology on $\L$ is the same as the product topology induced 
by \eqref{e:struttura}. Indeed, the $\L_{[\omega]}$ are finitely generated 
(free) $\Z_\ell$-modules (hence compact), and the product on the right 
is compact by the Tychonov Theorem. 
Moreover, $\L$ is compact, because it is the inverse limit of the 
finitely generated $\Z_\ell$-modules $\Z_\ell[\G_n]$. Finally, the 
map between $\L$ and the product is induced by
a choice of the continuous characters $\omega$, hence it is a 
continuous bijection between compact spaces, i.e., a homeomorphism. 

\smallskip
\noindent
\item The ring $\Z_\ell[\omega(\G)]$ consists exactly of the integers of 
$\Q_\ell(\omega(\G))$. 

\smallskip
\noindent
\item The proof of Theorem \ref{t:struttura1} also yields isomorphisms
\[\prod_{[\omega]\in\mathscr R_n}\L_{[\omega]}\;
\stackrel{\,\sim\ }{\longrightarrow}\;\Z_\ell[\G_n]=:\Lambda_n\]
induced by the projections $\pi_n$ for every $n$.
\end{enumerate} 
\end{rems}

\subsection{Finitely generated ideals}
For any $[\omega]\in \mathscr{R}$, let $\pi_{[\omega]}\colon \L 
\longrightarrow \L_{[\omega]}$ be the canonical projection 
$x\mapsto e_{[\omega]}x$. 

The field $\Q_\ell(\omega(\G))$ is an unramified extension of 
$\Q_\ell$ (because $p\neq\ell$), so 
Remark \ref{RemLambdaCompact}.3 implies that $\L_{[\omega]}$ is a 
discrete valuation ring whose maximal ideal is generated by
\[ \ell_{[\omega]}:=e_{[\omega]}\ell.\]

\begin{prop}\label{LambdaPI}
An ideal in $\L$ is principal if and only if it is closed.

\noindent
Moreover, a closed ideal $I\neq (0)$ is generated by $\alpha_I\in\L$ such that
\begin{equation} \label{e:alfaI} 
e_{[\omega]}\alpha_I=\ell_{[\omega]}^{n(I,\omega)}\;\;
\text{ where }\  n(I,\omega):=
\min\left\{v_{\ell_{[\omega]}}(x)\,:\,x\in \pi_{[\omega]} (I)\right\}
\end{equation}
(with the usual convention $\ell_{[\omega]}^{+\infty}=0$).
\end{prop}
  
\begin{proof} Let $I$ be an ideal of $\Lambda$. 
Then $\pi_{[\omega]}(I)=e_{[\omega]}I$ is an ideal of $\Lambda_{[\omega]}$
and so it is generated by a power of $\ell_{[\omega]}$, that is, 
$e_{[\omega]}I=(\ell_{[\omega]}^{n(\omega,I)})$ with $n(\omega,I)$ as 
in \eqref{e:alfaI}. The sequence 
\[ \alpha_n=\sum_{[\omega]\in\mathscr R_n}\ell_{[\omega]}^{n(\omega,I)} \] 
takes values in $I$ and converges to $\alpha_I$. Therefore, 
$(\alpha_I)\subseteq I$ is true if $I$ is closed.

To prove the opposite inclusion, it is enough to note that for 
$\beta\in I$ one has 
$e_{[\omega]}\beta=x_{[\omega]}\ell_{[\omega]}^{n(\omega,I)}$ for 
some $x_{[\omega]}\in\Lambda_{[\omega]}$ and hence
\[ \beta=\sum_{[\omega]\in\mathscr{R}} x_{[\omega]}\ell_{[\omega]}^{n(\omega,I)}
=x\alpha_I \]
with $x=\sum_{[\omega]} x_{[\omega]}\in\L$. This proves $I=(\alpha_I)$.

\smallskip
Conversely, any principal ideal $I=\alpha\Lambda$ is closed because
multiplication by $\alpha$ is continuous and $\Lambda$ is Hausdorff 
and compact.
\end{proof}

\begin{cor}\label{LambdaFGPI}
Every finitely generated ideal in $\L$ is principal.
\end{cor}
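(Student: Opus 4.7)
The plan is to deduce this directly from Proposition \ref{LambdaPI}: it suffices to show that every finitely generated ideal is closed, and then the proposition identifies a principal generator.

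First I would invoke Remark \ref{RemLambdaCompact}.2, which tells us that $\L$ is compact (and Hausdorff, being a product of discrete valuation rings sitting inside $\ov\Q_\ell$-algebras with the product topology). Hence $\L^n$ is compact for any finite $n$, by Tychonov.

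Next, given a finitely generated ideal $I=(\alpha_1,\dots,\alpha_n)$, I would consider the map
\[ \varphi\colon\L^n\longrightarrow\L\,,\qquad (x_1,\dots,x_n)\longmapsto\sum_{i=1}^n x_i\alpha_i\,. \]
This is continuous, since multiplication and addition in the topological ring $\L$ are continuous. Therefore $I=\varphi(\L^n)$ is the continuous image of a compact space in a Hausdorff space, hence compact, hence closed.

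Applying Proposition \ref{LambdaPI} to the closed ideal $I$ then yields that $I$ is principal, generated by the element $\alpha_I=(\ell_{[\omega]}^{n(I,\omega)})_{[\omega]}$. There is no real obstacle here; the only thing worth double-checking is that the product topology on $\L$ (under which compactness was established) agrees with the topology that makes ring operations continuous, which is exactly the content of Remark \ref{RemLambdaCompact}.2.
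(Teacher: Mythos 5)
Your argument is correct and is essentially the paper's own: the paper's proof is the one-liner ``Since $\L$ is compact, every finitely generated ideal is closed, hence principal by the previous proposition,'' and you have simply spelled out why compactness of $\L$ forces a finitely generated ideal to be closed (continuous image of the compact $\L^n$ in the Hausdorff space $\L$). Same route, with the standard details made explicit.
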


\begin{proof} 
Since $\L$ is compact, every finitely generated ideal is closed, 
hence principal by the previous proposition.
\end{proof}

\begin{rem}
Another immediate consequence of Proposition \ref{LambdaPI} is the
fact that the closed ideals of $\L$ can be identified with functions 
$\mathscr{R}\rightarrow\N\cup\{+\infty\}$.
\end{rem}

It is worth noting that $\Lambda$ is Noetherian if and only if 
$\mathscr{R}$ is finite, since the finitely generated condition 
only holds (in general) for closed ideals. However,  
its structure naturally leads to a definition of characteristic ideals 
for locally finitely generated modules, 
even when $\G$ is not topologically finitely generated (this latter case, 
on the contrary, needs some special treatment in the case of the algebra 
$\Z_p[[\G]]$, see, e.g., \cite{BBLNY} and \cite{BDL})

\noindent
This is particularly relevant in the Iwasawa theory framework, 
where characteristic ideals provide the algebraic ingredient of 
Main Conjectures.

\subsection{The spectrum of $\L$}  
An immediate consequence of Theorem \ref{t:struttura1} 
is the fact that prime ideals of $\L$ have the form
\[(0_{[\omega]})\oplus(1-e_{[\omega]})\L\]
or
\[\gotm_{[\omega]}\oplus(1-e_{[\omega]})\L\]
where $\gotm_{[\omega]}=(\ell_{[\omega]})$ is the maximal ideal of 
$\L_{[\omega]}$. It follows that the nilradical of $\L$ is $(0)$, and 
that the Jacobson radical of $\L$ is
\[\prod_{[\omega]\in\mathscr{R}} \gotm_{[\omega]}=
\prod_{[\omega]\in\mathscr{R}}(\ell_{[\omega]})=\ell\L .\]
In particular, $\L$ has Krull dimension $1$, and 
\begin{equation} \label{e:spettro} 
\Spec(\L)= \bigsqcup_{[\omega]\in\mathscr{R}}\Spec(\L_{[\omega]}).
\end{equation} 
Localizing at $\gotm_{[\omega]}\oplus(1-e_{[\omega]})\L$, we obtain 
$\L_{[\omega]}$ itself, while the localization of $\L$ at 
$(0_{[\omega]})\oplus(1-e_{[\omega]})\L$ is the 
quotient field $Q(\L_{[\omega]})\simeq\Q_\ell(\omega(\G))$. Hence we 
can interpret $\L$ also as the direct product of its localization at 
maximal ideals (equivalently, at height 1 primes).

\subsection{Augmentation ideals}\label{SecAugId}
Recall that $\G=\displaystyle{\varprojlim \G_n}$, and we have canonical
projections $\pi_n:\G \twoheadrightarrow \G_n$. We set $G_n=\Ker(\pi_n)$,
so that $\G_n\simeq \G/G_n$.

\begin{dfn}\label{DefAugId}
For every $n$, let 
$I_n:= \big(\g-1\,:\,\g\in G_n\big)\subseteq\L$ be the 
{\em augmentation ideal associated with $G_n$}. 
\end{dfn}

By definition,
\[ \L = \varprojlim \Z_\ell [\G_n] \simeq \varprojlim \Z_\ell[\G/G_n]
\simeq \varprojlim \L/I_n\L ,\]
and 
\[ \L/I_n\L \simeq \Z_\ell[\G_n] = \bigoplus_{[\omega]\in \mathscr{R}_n} 
e_{[\omega],n} \Z_\ell[\G_n] \simeq \bigoplus_{[\omega]\in \mathscr{R}_n}
\L_{[\omega]} .\]
Therefore, for any $n$, we have
\[ I_n = \bigcap_{[\omega]\in \mathscr{R}_n} (1-e_{[\omega]})\L \simeq
 \prod_{[\omega]\in \mathscr{R}-\mathscr{R}_n} \L_{[\omega]} ,\]
and all augmentation ideals are idempotent, i.e., $I_n^2=I_n$
(this yields a simpler proof of \cite[Lemma 3.7]{BL1}).

A notable consequence is the fact that, contrary to the $\ell=p$ case, 
augmentation ideals cannot be used to apply the usual topological
generalization of Nakayama's Lemma (\cite[Section 3]{BaHo}),
because they are not topologically nilpotent.

\section{Characteristic ideals of $\L$-modules}\label{s:CharId}
The path indicated by Theorem \ref{t:struttura1} and the structure of $\L$
 naturally lead to the definition of an analogue of characteristic ideals 
 for certain $\L$-modules.

\subsection{Locally finitely generated $\L$-modules}
In accordance with \eqref{e:spettro}, a $\L$-module $M$ is equivalent 
to a sheaf of $\L_{[\omega]}$-modules 
$(M_{[\omega]})_{[\omega]\in\mathscr{R}}$ on
$\Spec \L$, defined by the components $M_{[\omega]}=e_{[\omega]}M$. 

\begin{dfn}\label{DefOmegaComp}
Let $M$ be a $\L$-module. Then, for every character 
$[\omega]\in \mathscr{R}$, we define the {\em $[\omega]$-component} 
of $M$ as the $\L_{[\omega]}$-module 
\[ M_{[\omega]} := e_{[\omega]} M . \]
We say that a $\L$-module $M$ is {\em locally finitely generated} 
if each $M_{[\omega]}$ is a finitely generated $\L_{[\omega]}$-module.
 
\noindent
We say that a locally finitely generated $\L$-module $M$ is 
{\em locally torsion} if each $M_{[\omega]}$ is a finitely generated 
torsion $\L_{[\omega]}$-module.
\end{dfn}

Note that being locally finitely generated is not enough for a module 
$M$ to be finitely generated over $\L$ (see Example \ref{eg:infgen}).

\subsubsection{Character decomposition of $M_{[\omega]}$}\label{ss:CharDec} The following will be useful to formulate analogues of the Main Conjecture.

\begin{prop} \label{p:cardecM} Let $M$ be a $\Lambda$-module. Its $[\omega]$-component has the decomposition
\begin{equation} \label{e:cardecM} M_{[\omega]}\otimes_{\Z_\ell}\Z_\ell[\omega(\Gamma)]=\bigoplus_{\chi\in[\omega]}M_{[\omega]}^{(\chi)} \end{equation}
where $M_{[\omega]}^{(\chi)}$ is the $\Z_\ell[\omega(\Gamma)]$-submodule on which $\Gamma$ acts via the character $\chi$.
Moreover $\Gal(\Q_\ell(\omega(\Gamma))/\Q_\ell)$ acts permuting transitively the $\chi$-components. In particular, one has
\begin{equation} \label{e:ismrfch} M_{[\omega]}^{(\chi)}\simeq M_{[\omega]}^{(\chi')} \end{equation}
as $\Z_\ell$-modules, if $\chi$ and $\chi'$ are both in $[\omega]$. \end{prop}

\begin{proof} In order to obtain \eqref{e:cardecM}, take $n$ such that $\omega$ factors through $\Gamma_n$. Then we can consider the operator
\[\eta_{\chi,n}:=\frac{1}{|\Gamma_n|}\sum_{\gamma\in\Gamma_n}\gamma\otimes\chi(\gamma^{-1})\]
and define
\[ M_{[\omega]}^{(\chi)}:=\eta_{\chi,n}\big(M_{[\omega]}\otimes_{\Z_\ell}\Z_\ell[\omega(\Gamma)]\big).\]
One checks that this is independent of the choice of $n$ by reasoning as in the proof of Theorem \ref{t:struttura1}. 

For any pair $\chi,\chi'\in[\omega]$, there is $\sigma\in\Gal(\Q_\ell(\omega(\Gamma))/\Q_\ell)$ such that $\chi'=\sigma\cdot\chi$. But then $1\otimes\sigma$ yields \eqref{e:ismrfch}, because $\eta_{\chi',n}=(1\otimes\sigma)(\eta_{\chi,n})$.    
\end{proof}

\subsection{Characteristic ideals}\label{SecCharId}
In classical Iwasawa theory, characteristic ideals are the 
algebraic counterpart of $p$-adic $L$-functions. 
In this section we provide a serviceable definition for 
the characteristic ideal of locally finitely generated $\L$-modules.

Assume that $M$ is a locally finitely generated $\L$-module. 
The structure theorem for finitely generated modules over discrete 
valuation rings yields isomorphisms 
\begin{equation} \label{e:struttura2} 
M_{[\omega]} \simeq \L_{[\omega]}^{r_{[\omega]}}\oplus 
\bigoplus_{j=1}^{s_{[\omega]}} \L_{[\omega]}/I_{[\omega],j} ,  
\end{equation}
where the non-negative integers $r_{[\omega]}$ and $s_{[\omega]}$, and 
the finite sets of ideals $I_{[\omega],j}$ are all uniquely determined 
by $M$ and $\omega$.

\begin{dfn}\label{DefCharId}
Let $M$ be a locally finitely generated $\L$-module. Assume 
every $M_{[\omega]}$ is written as in \eqref{e:struttura2}. 
Then, for any $[\omega]\in \mathscr{R}$, the {\em characteristic ideal} of 
 $M_{[\omega]}$ (or the {\em $[\omega]$-characteristic ideal} of $M$) is
\[ {\rm Ch}_{\L_{[\omega]}} (M_{[\omega]})=\left\{\begin{array}{ll}
\ds{ \prod_{j=1}^{s_{[\omega]}} I_{[\omega],j} }& 
\text{ if } r_{[\omega]}=0 ,\\
\ & \\
(0_{[\omega]}) & \text{ if } r_{[\omega]}\neq 0. 
\end{array}  \right. \] 
The {\em characteristic ideal} of $M$ is
\[ {\rm Ch}_\L (M) := 
\left( {\rm Ch}_{\L_{[\omega]}} (M_{[\omega]})\right)_{[\omega]}
\subseteq \L .\]
In particular, ${\rm Ch}_{\L_{[\omega]}}(M_{[\omega]})=\pi_{[\omega]}
({\rm Ch}_\L(M))$ for all $[\omega]\in\mathscr{R}$.
\end{dfn}

\begin{rem}\label{RemCharId}
The characteristic ideal in $\L$ of a locally finitely generated 
$\L$-module $M$ is $(0)$ if and only if all its 
$[\omega]$-components are non-torsion. Indeed, 
$\pi_{[\omega]}({\rm Ch}_\L(M))$ is non-trivial in $\Lambda_{[\omega]}$ 
if and only if $M_{[\omega]}$ is $\L_{[\omega]}$-torsion. In particular, 
$\pi_{[\omega]}({\rm Ch}_\L(M))$ is non-trivial
for all $[\omega]\in \mathscr{R}$ if and only if $M$ is locally torsion.

\noindent
For example, the characteristic ideal of the module $\L_{[\omega]}$ is 
\[ {\rm Ch}_\L(\L_{[\omega]})=(1-e_{[\omega]})\L , \]
i.e.,
\[ \pi_{[\chi]}({\rm Ch}_\L(\L_{[\omega]})) =
\left\{ \begin{array}{ll} (1_{[\chi]})=\L_{[\chi]} & 
\text{ if } [\chi]\neq [\omega] ,\\
\ & \\
(0_{[\omega]}) & \text{ if } [\chi]=[\omega]. \end{array} \right.  \]
\end{rem}

\begin{prop}\label{CharIdSeq}
Let 
\[ 0\longrightarrow M \longrightarrow N \longrightarrow L 
\longrightarrow 0 \]
be a short exact sequence of locally finitely generated $\L$-modules. Then, 
\[ {\rm Ch}_\L (N) = {\rm Ch}_\L (M){\rm Ch}_\L (L) .\]
\end{prop}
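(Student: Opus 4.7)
The plan is to reduce the claim to each factor $\L_{[\omega]}$ separately, where the ambient ring is a discrete valuation ring and the assertion becomes a standard fact about finitely generated modules over a DVR.

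First I would observe that for every $[\omega]\in\mathscr R$ the element $e_{[\omega]}$ is a central idempotent in $\L$, so the functor $M\mapsto M_{[\omega]}=e_{[\omega]}M$ is exact (one can see this directly, or by noting that $e_{[\omega]}\L=\L_{[\omega]}$ is a ring direct factor of $\L$, hence flat). Applying this functor to the given short exact sequence yields, for every $[\omega]$, a short exact sequence
\[ 0\longrightarrow M_{[\omega]}\longrightarrow N_{[\omega]}\longrightarrow L_{[\omega]}\longrightarrow 0 \]
of finitely generated $\L_{[\omega]}$-modules. In view of Definition \ref{DefCharId}, it then suffices to prove
\[ \mathrm{Ch}_{\L_{[\omega]}}(N_{[\omega]})=\mathrm{Ch}_{\L_{[\omega]}}(M_{[\omega]})\cdot\mathrm{Ch}_{\L_{[\omega]}}(L_{[\omega]}) \]
for each $[\omega]$ separately, since the characteristic ideal of a module is, by Proposition \ref{LambdaPI} and Definition \ref{DefCharId}, determined by its components.

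Next I would fix $[\omega]$ and recall from Remark \ref{RemLambdaCompact}.3 that $\L_{[\omega]}$ is a discrete valuation ring with uniformizer $\ell_{[\omega]}$. Writing $r_M$, $r_N$, $r_L$ for the free ranks of the three modules, a standard application of localization at the generic point of $\Spec\L_{[\omega]}$ gives $r_N=r_M+r_L$. Consequently, $r_N\neq 0$ if and only if $r_M\neq 0$ or $r_L\neq 0$, which handles the non-torsion cases immediately: both sides of the desired equality are $(0_{[\omega]})$ by Definition \ref{DefCharId}.

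The remaining case is when $M_{[\omega]}$, $N_{[\omega]}$, $L_{[\omega]}$ are all $\L_{[\omega]}$-torsion, hence of finite length. Here the characteristic ideal is simply $(\ell_{[\omega]}^{\mathrm{length}(\cdot)})$ (via the structure theorem \eqref{e:struttura2}, the length equals $\sum_j v_{\ell_{[\omega]}}$ of the elementary divisors). Additivity of length in short exact sequences, $\mathrm{length}(N_{[\omega]})=\mathrm{length}(M_{[\omega]})+\mathrm{length}(L_{[\omega]})$, then gives the multiplicative identity on characteristic ideals at $[\omega]$. Assembling these identities over all $[\omega]\in\mathscr R$, and using that the product of ideals in $\L$ is computed componentwise in the decomposition \eqref{e:struttura}, yields the claim. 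The only mild subtlety is verifying that the componentwise product $\prod_{[\omega]}\bigl(\mathrm{Ch}_{\L_{[\omega]}}(M_{[\omega]})\cdot\mathrm{Ch}_{\L_{[\omega]}}(L_{[\omega]})\bigr)$ really equals $\mathrm{Ch}_\L(M)\cdot\mathrm{Ch}_\L(L)$ as ideals of $\L$, which follows since products of closed ideals in $\L$ respect the decomposition of Theorem \ref{t:struttura1}. No serious obstacle is expected.
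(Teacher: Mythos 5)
Your proof is correct and follows essentially the same route as the paper: reduce componentwise via the exactness of $M\mapsto e_{[\omega]}M$, then invoke the standard DVR fact (which the paper states tersely as a consequence of $\L_{[\omega]}$ being a discrete valuation ring, and which you spell out via rank and length additivity). The extra care you take in checking that componentwise multiplication of ideals agrees with the product in $\L$ is implicit in the paper's appeal to Proposition \ref{LambdaPI} and Definition \ref{DefCharId}, so there is no substantive difference.
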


\begin{proof}
The ideals are defined by their components so we need to check that for any 
$[\omega]\in \mathscr{R}$ 
\[ \pi_{[\omega]}({\rm Ch}_\L (N)) = 
\pi_{[\omega]}\left({\rm Ch}_\L(M){\rm Ch}_\L(L)\right) ,\]
i.e.,
\[ {\rm Ch}_{\L_{[\omega]}}(N_{[\omega]})=
{\rm Ch}_{\L_{[\omega]}}(M_{[\omega]})
{\rm Ch}_{\L_{[\omega]}}(L_{[\omega]}) .\]
The latter equality is a trivial consequence of the fact that 
$\L_{[\omega]}$ is a discrete valuation ring with finite residue field. 
\end{proof}

\subsubsection{Extension of constants and Main Conjecture}\label{ss:ExtConstMC} 
The decomposition \eqref{e:cardecM}
involves $\Lambda\otimes_{\Z_\ell}\Z_\ell[\omega(\Gamma)]$-modules.
By (an analogue of) Proposition \ref{CharIdSeq},
\begin{align} \label{e:cardecid}
{\rm Ch}_{\L_{[\omega]}} (M_{[\omega]})\otimes_{\Z_\ell}\Z_\ell[\omega(\Gamma)] &
= \prod_{\chi\in[\omega]} {\rm Ch}_{\L_{[\omega]}\otimes_{\Z_\ell}\Z_\ell[\omega(\Gamma)]} 
(M_{[\omega]}^{(\chi)}) \\
 \ & = \left({\rm Ch}_{\L_{[\omega]}\otimes_{\Z_\ell}\Z_\ell[\omega(\Gamma)]} 
(M_{[\omega]}^{(\omega)})\right)^{|[\omega]|} , \nonumber
\end{align} 
where the last equality follows from \eqref{e:ismrfch}.
Whenever $|[\omega]|>1$ (i.e., when $\omega(\Gamma)$ is not a subset of $\Z_\ell$), 
this gives rise to a phenomenon which has no analogue in the classical Iwasawa theory with $\ell=p$, where decompositions like the ones in 
Theorem \ref{t:struttura1} and Proposition \ref{p:cardecM} do not exist.

To formulate a Main Conjecture for a $\L$-module $M$, we need to define
an $\ell$-adic $L$-function $\mathscr{L}_M$, which interpolates, {\em at all $\chi\in \G^\vee$}, special values of 
$L$-functions attached to $M$. Thus we expect a direct link between $\chi(\mathscr{L}_M)$ and $M_{[\omega]}^{(\chi)}$. On the other hand, a 
Main Conjecture requires a relation between an element $\mathcal{L}_M$ in $\L$ and the characteristic ideal ${\rm Ch}_\L (M)$. Therefore, to comply with
Definition \ref{DefCharId} and with \eqref{e:cardecid}, we shall define our $\mathcal{L}_M$ as the 
unique element of $\L$ whose $[\omega]$-component is the product of all the $\chi$-components of 
$\mathscr{L}_M$ as $\chi$ varies in $[\omega]$. We shall do so in Section \ref{s:StickEl}, where $M=X$
is the class group of a $\Z_p^d$-extension of a global function field,
$\mathscr{L}_M=\Theta_{\calf/F,S,v_0}(u)$ 
(Proposition \ref{p:StickTateAlg} and \eqref{e:interp}) 
and $\mathcal{L}_M=\theta_{\calf/F,S,v_0}$ is in Definition \ref{d:ModStickEl}. 

\begin{rem} When $M$ is locally torsion, all characteristic ideals are just powers of 
$\ell$ (i.e., ideals in $\Z$). Note that this contributes to hide the importance of the underlying 
structure and the contribution of every single character $\chi$. 
\end{rem}

\section{Sinnott modules}\label{SecSinMod}
We now describe a specific class of $\L$-modules which will include all our arithmetic examples. The main feature will be formulas for ranks (respectively, orders) of locally finitely generated (respectively, locally torsion) $\L$-modules. Such formulas shall provide a vast generalization 
of Sinnott's Theorem on the $p$-adic convergence of the orders 
of $\ell$-class groups in $\Z_p$-extensions of number fields. 
Because of this we decided to call them {\em Sinnott modules}.

\subsection{$I_\bullet$-completeness}\label{s:IComplMod}
For any $n\geqslant 0$, set
\[ e_n:=\sum_{[\omega]\in\mathscr{R}_n}e_{[\omega]} ,\]
and 
\begin{equation} \label{e:enqdr} 
e_{[n]}:=\sum_{[\omega]\in\mathscr{R}_n-\mathscr{R}_{n-1}}
e_{[\omega]}=e_n-e_{n-1}\, \end{equation}
where we put $\mathscr{R}_{-1}=\emptyset$ and $e_{-1}=0$.

Let $M$ be a $\L$-module. Clearly, $I_ne_nM=0$. Hence, $e_nM$ has 
the structure of $\L$ and also of $\L/I_n\L$-module, and 
 \[  e_nM\simeq M/I_nM .\] 

\begin{dfn}\label{d:nQuotnComp}
For any $\L$-module $M$ and for any $n$, we define the {\em $n$-quotient} 
of $M$ as 
\[ e_nM\simeq M/I_nM \simeq M \otimes_\L \L/I_n\L  .\]
We also define the {\em $[n]$-component} of $M$ as
\[ M_{[n]} := e_{[n]} M . \]
\end{dfn}

\noindent
By construction, for all $m > n$, we have natural projections 
$M/I_mM \twoheadrightarrow M/I_nM$,
which define an inverse limit.

\noindent
Moreover, by definition, for any $\L$-module $M$, 
\begin{equation}\label{e:DecSinMod}
 M/I_nM\simeq e_nM = \bigoplus_{i=0}^n M_{[i]}. 
\end{equation}

\begin{dfn}\label{DefIComplMod}
A $\L$-module $M$ is said to be {\em $(I_n)_n$-complete} (or simply 
{\em $I_\bullet$-complete}) if the natural projection map
\[ \pi_M : M \longrightarrow \ds{ \liminv M/I_nM} \]
is bijective.
\end{dfn}

\noindent
By definition, we immediately get a 
decomposition for an $I_\bullet$-complete module
\begin{equation}\label{EqM1} 
M \simeq \varprojlim_n\, \bigoplus_{i=0}^n M_{[i]} = 
\prod_{n\geqslant 0} M_{[n]} = 
\prod_{[\omega]\in \mathscr{R}} M_{[\omega]} .
\end{equation}
 
Vice versa, any collection of $\L_{[\omega]}$-modules 
$\{ M_{[\omega]}\}_{[\omega]\in \mathscr{R}}$ defines an 
$I_\bullet$-complete $\L$-module by simply taking 
$M:=\prod_{[\omega]\in \mathscr{R}} M_{[\omega]}$.
Indeed, one has $M_{[\omega]}=e_{[\omega]}M$ for all $[\omega]$, 
and therefore, $I_nM=\prod_{[\omega]\in \mathscr{R}-\mathscr{R}_n}
M_{[\omega]}$. Hence
\begin{align*} 
M:&=\prod_{[\omega]\in \mathscr{R}} M_{[\omega]} =
\prod_{n\geqslant 0}\, 
\bigoplus_{[\omega]\in \mathscr{R}_n-\mathscr{R}_{n-1}} e_{[\omega]}M \\
&=\prod_{n\geqslant 0}\,M_{[n]}  = \varprojlim e_n M = \varprojlim M/I_nM ,
\end{align*}
where all maps are canonical projections as required by 
Definition \ref{DefIComplMod}.

\noindent
Note that the $I_\bullet$-completeness of 
$M=\prod_{[\omega]\in \mathscr{R}} M_{[\omega]}$, is independent of 
the $M_{[\omega]}$. In particular, they do not need to be complete 
with respect to their own topology. For example,  
\[ M=\prod_{[\omega]\in \mathscr{R}} \L_{[\omega]}[x] , \]
is an $I_\bullet$-complete $\L$-module for any variable $x$.

\begin{ex} \label{eg:infgen} 
Let $\Gamma=\Z_p$ and consider the $I_\bullet$-complete module
\[ M=\prod_{[\omega]\in\mathscr{R}}\L_{[\omega]}^{|\omega(\Gamma)|} .\]
Then, $M$ is compact (being a product of compact rings) and locally 
finitely generated. However, $M$ is not a finitely generated $\L$-module. 

\noindent
Indeed, assume  $x_1,\dots,x_k$ generate $M$ as a $\L$-module. 
Then $e_nM$ is a $\L/I_n\L$-module generated by $e_nx_1,\dots,e_nx_k$. 
This leads to a contradiction, because the $\L/I_n\L$-rank of 
$e_nM=\prod_{[\omega]\in\mathscr R_n}\L_{[\omega]}^{|\omega(\Gamma)|}$ 
is unbounded as $n$ grows. 
\end{ex}

Examples like the one above show that being locally finitely generated 
is far from enough to guarantee that a module $M$ is finitely generated 
over $\L$. 

\begin{prop} \label{p:FinGenICMod}
An $I_\bullet$-complete $\Lambda$-module $M$ is finitely generated if 
and only if it is locally finitely generated and the local rank 
function 
\[ \rank_{\ell,M}\colon \mathscr R\longrightarrow\N\,,\quad
[\omega]\mapsto\dim_{\F_{[\omega]}}M_{[\omega]}/\ell_{[\omega]}M_{[\omega]} \]
is bounded (where $\F_{[\omega]}:=\Lambda_{[\omega]}/\gotm_{[\omega]}
= \Lambda_{[\omega]}/\ell_{[\omega]}\Lambda_{[\omega]}$ 
denotes the residue field of 
the local ring $\Lambda_{[\omega]}$).
\end{prop}

\begin{proof} It is obvious that if $x_1,\dots,x_n$ generate $M$ over 
$\Lambda$ then $e_{[\omega]}x_1,\dots,e_{[\omega]}x_n$ generate 
$M_{[\omega]}$ and 
$\dim_{\F_{[\omega]}}M_{[\omega]}/\ell_{[\omega]}M_{[\omega]}\leqslant n$
for every $[\omega]$.
 
Vice versa, if $\rank_{\ell,M}$ is bounded by $n$, then for every 
$[\omega]$ there are elements 
$x_{1,[\omega]},\dots,x_{n,[\omega]}\in M_{[\omega]}$ whose reductions 
modulo $\ell$ generate $M_{[\omega]}/\ell_{[\omega]}M_{[\omega]}$.
 Since 
$M$ is $I_\bullet$-complete, we can take
\[ x_i:=\big(x_{i,[\omega]}\big)\in\prod_{[\omega]}M_{[\omega]}=M \]
for $i\in\{1,\dots,n\}$. One easily checks that $x_1,\dots,x_n$ generate 
$M$ as a $\Lambda$-module.
\end{proof}

\noindent
For other properties of $I_\bullet$-complete modules (in a more 
general setting) see, e.g., \cite[Section 2.3]{BDL}.

\subsection{$\Z_\ell$-rank of $\L_{[\omega]}$}\label{s:lOrd} 
Let
\[ \mathscr S_n:=\big\{[\omega]\in\mathscr R:\omega(\G)=\bmu_{p^n}\big\} \]
Thus, for any $[\omega]\in\mathscr S_n$, the ring $\L_{[\omega]}$ 
is isomorphic to the ring of integers of $\Q_\ell(\bmu_{p^n})$, that is, 
$\L_{[\omega]}\simeq\Z_\ell[\bmu_{p^n}]$. In particular, the rank of 
$\L_{[\omega]}$ as a $\Z_\ell$-module is the degree of the (unramified) 
local extension $\Q_\ell(\bmu_{p^n})/\Q_\ell$, and hence, it coincides 
with the order of $\ell$ in $(\Z/p^n\Z)^*$. We denote this number 
by $f_n(\ell,p)$, or simply by $f_n$ when $\ell$ and $p$ are clearly 
fixed. We gather here, for later use, some well-known formulas for $f_n$.

For $p\neq 2$, let $a(\ell,p):=v_p(\ell^{\,f_1(\ell,p)}-1)$. 
It is easy to see 
\[ f_n(\ell,p)=[\Q_\ell(\bmu_{p^n}):\Q_\ell] = 
\left\{ \begin{array}{ll} 1 & \text{ if } n=0 ,\\
\ & \\
 f_1(\ell,p) & \text{ if } 1\leqslant n\leqslant a(\ell,p) ,\\
\ & \\
f_1(\ell,p)p^{n-a(\ell,p)} & \text{  if } n>a(\ell,p) .\end{array} \right. \]
For $p=2$ we have an obvious shift in exponents, which leads to
\begin{itemize}
\item if $\ell\equiv 1\pmod{4}$ and $v_2(\ell-1) =:a(\ell,2)$ for some 
$a(\ell,2)\geqslant 2$, then
\[ f_0(\ell,2)=\cdots=f_{a(\ell,2)}(\ell,2)=1 \quad\text{and}
\quad f_n(\ell,2)=2^{n-a(\ell,2)}\ \text{ for any } n>a(\ell,2) ;\]
\item if $\ell\equiv 3\pmod{4}$ and $v_2(\ell^{\,2}-1) =:a(\ell,2)$ for 
some $a(\ell,2)\geqslant 2$, then
\[ f_0(\ell,2)=f_1(\ell,2)=1, \quad f_2(\ell,2)= \cdots=
f_{a(\ell,2)}(\ell,2)=2 \] and
\[ f_n(\ell,2)=2^{n-a(\ell,2)+1}\quad \text{for any } n>a(\ell,2).\]
\end{itemize}
We also note that all of this can be summarized by the statement that 
for every $\ell$ and $p$ one has
\begin{equation} \label{e:fngnrl} 
f_n(\ell,p)=c(\ell,p)p^n\quad \text{for }n\gg 0 
\end{equation}
with $c(\ell,p)\in\Q^*$.

\begin{rem}
If $\G$ is topologically finitely generated, then the sets
$\mathscr S_n$ are all finite and for every $n$ there is some 
$k\geqslant n$ such that $\mathscr S_n\subseteq\mathscr R_k$. Moreover, 
one has $\mathscr S_n=\mathscr{R}_n-\mathscr{R}_{n-1}$ for all $n$ 
assuming $\Gamma_n=\Gamma/\Gamma^{p^n}$.  
In this case, by definition,
\[ e_{[n]}\L\simeq\Z_\ell[\bmu_{p^n}]^{|\mathscr{S}_n|} .\]
\end{rem}
 
At the end of the proof of Theorem \ref{t:struttura1} we have seen that
$|[\omega]|=[\Q_\ell(\omega(\G)):\Q_\ell]$, hence, for all $n\geqslant 0$
\begin{equation} \label{e:snfn=} 
|\mathscr{S}_n|f_n(\ell,p) = \big|\{ \omega\in \G^\vee:\omega(\G)=
\bmu_{p^n}\}\big| .
\end{equation}
In particular, $|[\omega]|=1$ (i.e., $\omega$ is its own equivalence class)
if and only if $\omega(\G)\subset \Z_\ell$. 

Moreover we remark that $[\omega_1]=[\omega_2]$ yields 
$\Ker(\omega_1)=\Ker(\omega_2)$, but the reverse implication does not hold 
because the action of $G_{\Q_\ell}$ on primitive $p^n$-th roots of unity is 
not transitive in general. It is transitive if and only if 
$f_n(\ell,p)=\varphi(p^n)$. This last condition is equivalent to
\begin{enumerate}
\item[a.] $\ell$ is a generator of $(\Z/p\Z)^*$, i.e., $f_1(\ell,p)=p-1$, and
\item[b.] $v_p(\ell^{p-1}-1)=1$, i.e., $a(\ell,p)=1$.
\end{enumerate}

\subsection{Sinnott Modules: orders, $\Z_\ell$-ranks 
and $\ell$-ranks}\label{SecSinMod2}

\begin{dfn}\label{d:SinMod}
We say that a $\L$-module $M$ is a {\em Sinnott module} when $M$ is 
locally finitely generated, $I_\bullet$-complete and the set
$$\mathscr S_n(M):=\big\{[\omega]\in\mathscr S_n:M_{[\omega]}\neq0\big\}$$
is finite for every $n$.

\noindent
Moreover, we say that a Sinnott module is {\em torsion} 
when it is locally torsion.
\end{dfn}

We also define 
\[ \varepsilon_n:=\sum_{[\omega]\in\mathscr S_n}e_{[\omega]} .\]
Then one can see that an $I_\bullet$-complete $\Lambda$-module $M$ is 
Sinnott if and only if $\varepsilon_nM$ is a finitely generated 
$\Z_\ell$-module for every $n$. 

\noindent
If $\G$ is topologically finitely generated, then $\mathscr S_n$ is 
finite and $M$ is Sinnott if it is $I_\bullet$-complete and locally 
finitely generated.
We recall that the latter condition suffices for attaching to $M$ 
the characteristic ideal ${\rm Ch}_\L(M)$ (Definition \ref{DefCharId}).
\medskip 

Let $M$ be a Sinnott module. We attach to it the following numbers:
\begin{equation} \label{e:dfnrnrhn} 
r_n=r_n(M) := \frac{\rank_{\Z_\ell} \varepsilon_nM}{f_n(\ell,p)} \ \  
\text{ and }\ \ \rho_n=\rho_n(M) := 
\frac{\rank_\ell\varepsilon_nM}{f_n(\ell,p)}.
\end{equation}
Moreover, if $M$ is a torsion Sinnott module, i.e., if $r_n=0$ for all 
$n$, we also consider
\begin{equation} \label{e:dfntn}  
t_n=t_n(M):=\frac{v_\ell (|\varepsilon_nM|)}{f_n(\ell,p)} .
\end{equation}

\begin{thm}\label{t:AdmMod1&2} 
Let $M$ be a Sinnott module. Then the numbers $r_n$, $\rho_n$ and (if $M$ 
is torsion) $t_n$ are all in $\N$. Put
\begin{equation} \label{e:dfnM(n)} 
M(n):=\bigoplus_{m=0}^n \varepsilon_mM. 
\end{equation}
With the notation of Section \ref{s:lOrd}, 
if $p\neq 2$, then, for every $n\geqslant 0$,
\[ \rank_{\Z_\ell} M(n) = \sum_{m=0}^n f_m(\ell,p) r_m  =
r_0 + f_1(\ell,p)\left( \sum_{m=1}^{a(\ell,p)}  r_m + 
\sum_{m=a(\ell,p)+1}^n p^{m-a(\ell,p)}r_m \right) .\]
If $p=2$ and $\ell\equiv 1\pmod{4}$, then, for every $n\geqslant 0$,  
\[ \rank_{\Z_\ell} M(n)  = \sum_{m=0}^n f_m(\ell,2) r_m  =
\sum_{m=0}^{a(\ell,2)} r_m + 
\sum_{m=a(\ell,2)+1}^n 2^{m-a(\ell,2)}r_m  .\]
Finally, if $p=2$ and $\ell\equiv 3\pmod{4}$, then,
for every $n\geqslant 0$, 
\[ \rank_{\Z_\ell} M(n) = \sum_{m=0}^n f_m(\ell,2)r_m =
r_0+r_1+2 \left( \sum_{m=2}^{a(\ell,2)} r_m + 
\sum_{m=a(\ell,2)+1}^n 2^{m-a(\ell,2)} r_m \right) .\]
The formulas for $\rank_\ell M(n)$ and (if $M$ is torsion) for 
$v_\ell (|M(n)|)$ are basically identical: 
just replace all values $r_m$ with the numbers $\rho_m$ or\, $t_m$ defined above.
\end{thm}

\begin{proof} By the structure theorem for finitely generated modules 
over principal ideal domains, for any $n\geqslant 0$, we have
\begin{equation}\label{EqMomega0}
\varepsilon_nM=\bigoplus_{[\omega]\in \mathscr S_n}\, M_{[\omega]} \simeq 
\bigoplus_{[\omega]\in \mathscr S_n}\,\left( \L_{[\omega]}^{r_{[\omega]}}
\, \oplus \bigoplus_{j=1}^{s_{[\omega]}}\, 
\L_{[\omega]}/\left(\ell_{[\omega]}^{t_{[\omega], j}}\right) \right)
\end{equation}
where the integers $r_{[\omega]}$, $s_{[\omega]}$ and $t_{[\omega],j}$ 
are uniquely determined by $M$ and $\omega$. Remembering that 
for $[\omega]\in\mathscr S_n$ one has 
$\Lambda_{[\omega]}\simeq\Z_\ell[\bmu_{p^n}]$, we can rearrange the sums 
on the right-hand side of \eqref{EqMomega0} to get
\begin{equation}\label{EqMomega}
\varepsilon_nM \simeq \Z_\ell[\bmu_{p^n}]^{\sum_{[\omega]\in \mathscr S_n} 
r_{[\omega]}}\,\oplus \bigoplus_{h=1}^{s_n}\, 
\Z_\ell[\bmu_{p^n}]/\left(\ell^{t_{n,h}}\right) 
\end{equation}
with $s_n: =\ds{\sum_{[\omega]\in \mathscr S_n} s_{[\omega]}}$ and 
$\big\{t_{n,h}\big\}_h=\big\{t_{[\omega],j}\}_{[\omega],j}$. 
From \eqref{EqMomega} it follows (with $f_n=f_n(\ell,p)$)
\[ \rank_{\Z_\ell}\varepsilon_nM = 
f_n\sum_{[\omega]\in \mathscr S_n} r_{[\omega]}=f_n r_n  \]
and
\[ \rank_\ell \varepsilon_nM = f_n r_n  + f_n s_n= f_n \rho_n  \]
for every $n\geqslant 0$. Moreover, if $M$ is a torsion Sinnott module, 
 we obtain
\[ |\varepsilon_nM|=\ell^{f_n \sum_{h=1}^{s_n} t_{n,h}} = \ell^{f_nt_n}. \]
It is clear from the above that $r_n$, $\rho_n$ and $t_n$ are all in $\N$.
The computations of $\rank_{\Z_\ell} M(n)$, $\rank_\ell M(n)$ and 
(if $M$ is torsion) $v_\ell (|M(n)|)$ are straightforward applications 
of the decomposition \eqref{e:dfnM(n)} and the formulas in 
Section \ref{s:lOrd}.
\end{proof}

\begin{rem}\label{r:AdmModCardCharId} 
The isomorphism \eqref{EqMomega} is more convenient to assess 
cardinalities and ranks, but, by definition, the characteristic ideal 
has to be computed using \eqref{EqMomega0}. Indeed, with notations 
as above, we have 
\[ {\rm Ch}_{\L_{[\omega]}} (M_{[\omega]}) =\left\{ \begin{array}{ll}
\displaystyle{ \left(\ell_{[\omega]}^{\,t_{[\omega]}}\right)} &\ 
\text{if } r_{[\omega]}=0,\\
\ \\
(0) &\ \text{otherwise}, \end{array}\right.  \]
where we put $t_{[\omega]}:=\ds{ \sum_{j=1}^{s_{[\omega]}} t_{[\omega],j} }$.
\end{rem}

In Section \ref{s:NorSys} we shall see how normic systems provide examples
of $I_\bullet$-modules. Classical Iwasawa modules (like $\ell$-class groups and 
$\ell$-Selmer groups of abelian varieties) fit naturally in the definition of 
normic systems, hence all formulas presented here will be valid in 
relevant arithmetic settings (see Section \ref{s:ArAppl}).

\begin{rem}\label{r:RnSn}
In Iwasawa theory usually ranks and orders are computed for modules attached to subextensions $F_n$ of a pro-$p$-extension
$\calf/F$. In our case we have neat formulas for the modules arising from the sets 
$\mathscr{S}_n$, but it is the sets $\mathscr R_n$ which are associated with subextensions $F_n$ (recall that we are going to assume $\Gamma=\Gal(\calf/F)$ and 
$\Gamma_n=\Gal(F_n/F)$). For the sake of clarity,
we recall here the notations we used up to now.\smallskip

\noindent
\begin{center}\begin{tabular}{|c|c|}
\hline
{\bf Set} & {\bf Module} \\
\hline
$\mathscr{R}_n-\mathscr{R}_{n-1}$ & $e_{[n]}M=M_{[n]}$ \\
\hline
$\mathscr{R}_n$ & $\displaystyle{\sum_{m=0}^n e_{[m]}M = e_nM=M/I_nM}$ \\
\hline
$\mathscr{S}_n$ & $\varepsilon_nM$ \\
\hline
$\displaystyle{\bigcup_{m=0}^n \mathscr{S}_m}$ & 
$\displaystyle{\sum_{m=0}^n \varepsilon_mM=M(n)}$ \\
\hline
\end{tabular}\end{center}

\smallskip\noindent
When 
$\G$ is topologically finitely generated one can take 
$\G_n=\G/\G^{p^n}$. This forces
$\mathscr{S}_n=\mathscr{R}_n-\mathscr{R}_{n-1}$, so that 
$\varepsilon_nM=M_{[n]}$ and, more importantly, $M(n)=M/I_nM$. 
Hence in this setting Theorem \ref{t:AdmMod1&2}
will provide formulas for modules associated with the layers $F_n$. 
\end{rem}

\subsection{Applications}\label{SecApp1}
The results we present here derive from the main theorems of 
the previous section, hence are expressed in terms of the modules $M(n)$.
As such, they describe the growth along a tower of fields only if 
$M(n)=M/I_nM$, as discussed in Remark \ref{r:RnSn}.
  
\subsubsection{Washington's bounds}
An immediate consequence of Theorem \ref{t:AdmMod1&2} is the fact that
\[ M(n)= M(n+1) \iff  \varepsilon_{n+1}M=0 \iff 
\rank_\ell M(n) = \rank_\ell M(n+1) .\]
We obtain the following straightforward generalization of 
\cite[Proposition 1]{Wa75}.

\begin{thm}\label{t:GenWash}
A torsion Sinnott module $M$ is infinite if and only if there are infinitely 
many distinct $n\in \N$ such that $\varepsilon_nM\neq 0$.

\noindent In particular, $|M|=+\infty$ only if there are infinitely many 
distinct $n\in \N$ such that  
\begin{equation}\label{EqGenWaProp1} 
v_\ell (|M(n)|) \geqslant \rank_\ell M(n) \geqslant c\, p^n ,
\end{equation}   
where $c=c(\ell,p)$ is the constant of \eqref{e:fngnrl}.
\end{thm}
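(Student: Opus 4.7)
The plan is to deduce both equivalences from the decomposition $X \simeq \prod_{n\in\N} X_{[n]}$ of \eqref{EqX1} together with the explicit description of each $X_{[n]}$ as a $\Z_\ell[\bmu_{p^n}]$-module recorded in \eqref{EqXomega}. First I would observe that every $X_{[n]}$ is finite: by Theorem \ref{t:classgroupcomp1} it is isomorphic to a direct summand of $X_n$, which is the $\ell$-part of the class group of the number field $k_n$, and hence finite. Consequently $|X| = \prod_n |X_{[n]}|$ is infinite if and only if infinitely many factors $X_{[n]}$ are nontrivial, which proves the first equivalence.

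For the second statement, the implication from the displayed bounds to $|X| = +\infty$ is immediate since $|X_n|\to\infty$ and $X$ surjects onto $X_n$ by Corollary \ref{CorClassGrAsQuotients}. For the converse, assume $|X| = +\infty$, so that by the first part there are infinitely many $n$ with $X_{[n]}\neq 0$. From \eqref{EqXomega}, such an $X_{[n]}$ has $s_{[n]}\geqslant 1$ summands of the form $\Z_\ell[\bmu_{p^n}]/(\ell^{t_{n,j}})$, so its $\ell$-rank equals $f_n s_{[n]}\geqslant f_n$. By Theorem \ref{t:classgroupcomp1} we then have $\rank_\ell X_n \geqslant \rank_\ell X_{[n]} \geqslant f_n$. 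The inequality $v_\ell(|X_n|)\geqslant\rank_\ell X_n$ is automatic for any finite abelian $\ell$-group: writing $X_n\simeq\bigoplus_i\Z/\ell^{a_i}\Z$ with $a_i\geqslant 1$ gives $v_\ell(|X_n|)=\sum_i a_i \geqslant\rank_\ell X_n$.

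It remains only to exhibit a constant $c\in\Q$ with $f_n\geqslant cp^n$ for $n$ large enough. The explicit formulas for $f_n(\ell,p)$ listed before Theorem \ref{t:classgroupcomp2} show that $f_n = p^{n-a}f_1$ for $n>a$ when $p\neq 2$, with analogous expressions in the two subcases of $p=2$. In each regime one may take $c$ of the form $f_1/p^a$ (or its $p=2$ analogue), and this suffices since the assertion only concerns \emph{infinitely many} $n$, so finitely many small indices can be discarded. No substantive obstacle is expected here: the argument is essentially bookkeeping given the structure theorem and the explicit growth of $f_n$; the only point requiring mild care is distinguishing the three regimes for $f_n$, but this has no effect on the qualitative lower bound $\rank_\ell X_n \geqslant cp^n$.
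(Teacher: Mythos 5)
Your proposal is correct and follows essentially the same route as the paper: both parts rest on the decomposition $X \simeq \prod_n X_{[n]}$ with each $X_{[n]}$ finite, and on the chain $v_\ell(|X_n|) \geqslant \rank_\ell X_n \geqslant \rank_\ell X_{[n]} = f_n s_{[n]} \geqslant f_n \geqslant c\,p^n$ together with the explicit growth of $f_n$. You are a bit more explicit than the paper in two minor respects (spelling out the easy converse implication via the surjection $X\twoheadrightarrow X_n$, and recording the general inequality $v_\ell(|H|)\geqslant\rank_\ell H$ for finite abelian $\ell$-groups), but these are cosmetic rather than a different method.
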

 
\begin{proof}
The first statement is obvious from $M=\prod_{n\geqslant0}\varepsilon_nM$. 
 
\noindent 
For the second one just note that, by assumption, $r_n=0$. Hence, 
by \eqref{EqMomega},
\[ v_\ell (|\varepsilon_nM|)= f_n\sum_{h=1}^{s_n} t_{n,h} 
\geqslant f_n s_n=\rank_\ell\varepsilon_nM ,\] 
because $t_{n,h}\geqslant 1$ for all $h$ (this obviously holds for 
$\varepsilon_nM=0$ as well with $s_n=0$).
Thus, for every $n\geqslant0$ such that $s_n\neq0$ 
(i.e., $\varepsilon_nM\neq 0$), one has
\[ v_\ell (|\varepsilon_nM|) \geqslant \rank_\ell \varepsilon_nM 
\geqslant f_n \geqslant
\begin{cases} f_1p^{-a(\ell,p)}\cdot p^n & 
\text{ if } p\neq 2, \vspace{4pt}\\
2^{-a(\ell,2)+1}\cdot 2^n & \text{ if }p=2 .\end{cases}  \]
where the last inequality comes from the formulas in Section \ref{s:lOrd}.
The decomposition $M(n)=\bigoplus_{m=0}^n \varepsilon_mM$ 
yields \eqref{EqGenWaProp1}.
\end{proof}

\subsubsection{Sinnott's Theorem}\label{SecSinnott}
Recently M.~Ozaki has provided a new proof and some generalizations for a 
result of W.~Sinnott (see \cite[Theorem 1]{Oz} and the references there) showing that, if $X'_p(F_n)$ is the 
prime-to-$p$ part of the class group of $F_n$ (the layers of a pro-$p$-extension $\calf/F$, with
topologically finitely generated Galois group), then the $p$-adic limit
\[ \lim_{n\rightarrow +\infty} |X'_p(F_n)| \]
is well-defined (and independent of the filtration $F_n$). 
\medskip

In our setting, the existence of such limits for Sinnott modules (for any $\ell\neq p$) is a straightforward consequence of 
Theorem \ref{t:AdmMod1&2}.

\begin{thm}\label{t:GenSinn}
Let $M$ be Sinnott module. Then, the sequences 
\[ \left\{a_n:=\rank_{\Z_\ell} M(n) \right\}_{n\in \N} 
\quad \text{and}\quad 
\left\{b_n:=\rank_\ell M(n) \right\}_{n\in \N} \]
converge $p$-adically.

\noindent If $M$ is torsion, then also the sequence
\[ \left\{ c_n:= |M(n)| = \ell^{v_\ell(|M(n)|)}\right\}_{n\in\N} \]
converges $p$-adically.
\end{thm}

\begin{proof} 
The proof of Theorem \ref{t:AdmMod1&2} yields
\[ a_{n+1}-a_n=f_{n+1}r_{n+1}\quad\text{ and }\quad b_{n+1}-b_n=
f_{n+1}\rho_{n+1} ,\] 
so that convergence is obvious from \eqref{e:fngnrl}. 
If moreover $M$ is torsion, one has 
\begin{equation} \label{e:cngrmdpn} 
c_{n+1}-c_n = c_n(\ell^{\,f_{n+1}t_{n+1}}-1) \equiv 0 \pmod{p^{n+1}} ,
\end{equation}
because $f_{n+1}$ is the order of $\ell$ in $(\Z/p^{n+1}\Z)^*$.
\end{proof} 

\begin{rem} \label{r:snntt} 
As observed in Remark \ref{r:RnSn}, usually one wants to apply the theory 
to $\Gamma=\Gal(\calf/F)$. The $p$-adic limits of Theorem \ref{t:GenSinn} 
are independent of the choice of a filtration $\calf=\bigcup_nF_n$, 
since they are defined by means of the modules $M(n)$, corresponding to 
the sets $\mathscr S_n$, which depend only on $\Gamma$. One could 
consider instead the sequences $\{a_n'\}$, $\{b_n'\}$ and $\{c_n'\}$ 
obtained replacing $M(n)$ with $e_nM$ and hence depending on the choice 
of the $F_n$'s. However, if $M$ is Sinnott then the limits of these 
new sequences are the same as the ones in Theorem \ref{t:GenSinn}, 
because finiteness of the sets $\mathscr R_n$ and $\mathscr S_n(M)$ 
implies that for each $n$ there exists $m$ such that $M(n)$ is a 
submodule of $e_mM$ and vice versa. 
\end{rem}

It is quite natural to wonder what information (if any) is encoded in 
the Sinnott limits (i.e., the limits considered in Theorem \ref{t:GenSinn}).
Most likely, one cannot obtain anything significant without adding 
some strong hypothesis. In the following, we just give a couple of 
examples where computations are easy.

\begin{ex}[Limits for free $\Lambda$-modules, 
$\Gamma\simeq\Z_p^d$] \label{ex:LimFreeMod}
We consider free $\L$-modules, with $\G$ such 
that $\mathscr{S}_n$ is finite for all $n$. Let $M=\L^r$ for some 
finite $r$. Then one has
\[ \varepsilon_n M=\bigoplus_{[\omega]\in \mathscr{S}_n} \L^r_{[\omega]} \] 
and hence
\begin{equation}\label{e:RegRank} 
\rank_{\Z_\ell} \varepsilon_n M = \rank_\ell \varepsilon_n M =
r|\mathscr{S}_n|f_n .
\end{equation}
This yields, by \eqref{e:snfn=},
\[ \rank_\bullet M(n) = \sum_{m=0}^n r |\mathscr{S}_m|f_m=
r\cdot \sum_{m=0}^n |\{\omega\in \G^\vee:\omega(\G)=\bmu_{p^m}\}| ,\] 
(with $\bullet \in\{\Z_\ell,\ell\}$).

In the particular case $\Gamma=\Z_p^d$, one has 
\[ |\{\omega\in \G^\vee:\omega(\G)=\bmu_{p^n}\}|=p^{dn}-p^{d(n-1)}=
p^{d(n-1)}(p^d-1).\]
for any $n\geqslant 1$. Since $|\mathscr{S}_0|=1$, we obtain
\begin{align*}
\rank_\bullet M(n) & = \sum_{m=0}^n r|\mathscr{S}_m| f_m=
r+r\cdot \sum_{m=1}^n  p^{d(m-1)}(p^d-1) \\
\ & = r+r(p^d-1)\sum_{m=1}^n p^{d(m-1)}, 
\end{align*}
and thus
\begin{align*} 
\lim_{n\rightarrow +\infty} \rank_\bullet M(n) & =
r+r(p^d-1)\sum_{m\geqslant 1} p^{d(m-1)} \\
\ & = r+r(p^d-1)\cdot(1-p^d)^{-1} =r-r=0.
\end{align*}
More generally, one gets the same limit of $\rank_\ell$ also for torsion 
$\L$-modules such that the second equality in \eqref{e:RegRank} holds, 
for example $M=\L/\ell\L$ (and in this case one also has
\[ \lim_{n\rightarrow+\infty}|M(n)|=1 \]
$p$-adically) or $M=\prod_{[\omega]\in \mathscr{R}} 
\L_{[\omega]}/(\ell_{[\omega]}^{n_{[\omega]}})$ with an arbitrary 
choice of $n_{[\omega]}\in \N_{>0}\cup\{+\infty\}$.
\end{ex}

\begin{ex}[Limits for stable modules] \label{ex:LimRegMod} Let $M$ be a Sinnott module and recall the numbers $r_n$, $\rho_n$ defined in \eqref{e:dfnrnrhn}. 

\noindent
We say that $M$ is {\em $\Z_\ell$-stable} (respectively, 
{\em $\ell$-stable}) if the $r_n$'s (respectively, the $\rho_n$'s) 
are constant, i.e., if either of
\begin{equation} \label{e:stabile}  
\frac{\rank_\bullet \varepsilon_n M}{f_n} = 
\frac{\rank_\bullet \varepsilon_{n+1} M}{f_{n+1}} \quad 
\text{ for all } n\geqslant 0 
\end{equation}
(with $\bullet\in\{\Z_\ell,\ell\}$) is true. Moreover, 
we say that $\rank_\bullet M$ {\em stabilizes at $m$} if the equality 
in \eqref{e:stabile} holds for $n\geqslant m$.

\noindent
Note that a free module is not stable even if $\Lambda$ is Sinnott (i.e., 
the sets $\mathscr S_n$ are all finite), because
\[ \frac{\rank_\bullet \varepsilon_n \L}{f_n} = |\mathscr{S}_n|
\quad \text{ for all } n\geqslant 0 .\]
More generally, take a locally free $\L$-module written as
\[ M =\prod_{[\omega]\in \mathscr{R}} \L_{[\omega]}^{r_{[\omega]}}
=\prod_{n\geqslant 0} \prod_{[\omega]\in \mathscr{S}_n} 
\L_{[\omega]}^{r_{[\omega]}}\simeq
\prod_{n\geqslant 0}  \Z_\ell[\bmu_{p^n}]^{r_n} . \]
Then, $M$ is stable if and only if $r_n=r_{n+1}$ for all $n\geqslant 0$.

Let $M$ be a Sinnott $\L$-module whose $\rank$ stabilizes at some 
level $N$. Without loss of generality we can assume $N\geqslant a(\ell,p)$.
Then, for $p\neq2$, Theorem \ref{t:AdmMod1&2} yields
\[ \rank_\bullet M(n)=\sum_{m=0}^{N-1} r_mf_m +\sum_{m= N}^n rf_m
= \alpha(M) +rf_1\sum_{m=N}^n p^{m-a(\ell,p)} \]
where $r$ is the ratio in \eqref{e:stabile} for $n\geqslant N$ and 
$\alpha(M)\in\N$. Taking limits on $n$ we obtain
\[ \lim_{n\rightarrow +\infty} \rank_\bullet M(n)= 
\alpha(M)+rf_1\sum_{m\geqslant N} p^{m-a(\ell,p)}
=\alpha(M)-rp^{N-a(\ell,p)}\cdot\frac{f_1}{p-1} .\]
(The case $p=2$ is similar, with trivial modifications.) 

Similarly, we say that a torsion Sinnott module is {\em $v_\ell$-stable} 
(or { $v_\ell(|M|)$ \em stabilizes at $m$}) if the ratios $t_n$ of \eqref{e:dfntn} 
are constant (or become such for $n\geqslant m$). In this case, 
a computation like the one above leads to
\[ \lim_{n\rightarrow +\infty} |M(n)|= 
\ell^{\beta(M)-tp^{N-a(\ell,p)}\cdot\frac{f_1}{p-1}} \,,\]
which is a $p$-adic number because
$\ell^{f_1}\equiv 1\pmod{p}$ and so $\sqrt[p-1]{\ell^{f_1}}\in \Z_p^*$.
\end{ex}

\section{Normic systems}\label{s:NorSys}
Recall 
\[ \Lambda = \Z_\ell [[\G]] = \varprojlim \Z_\ell\left[\G_n\right]=
\varprojlim \L_n.\]
and the exact sequence induced by the canonical projection 
\[ 1 \longrightarrow G_n \longrightarrow \G \xrightarrow{\ \pi_n\ }  \G_n \longrightarrow 1. \]

\subsection{$(\ell,p)$-normic systems} 
We adapt the definition of normic systems (originally introduced 
in \cite{vau}) to our setting. It turns out that such systems 
naturally produce $I_\bullet$-complete $\Lambda$-modules.
Moreover, the examples of class groups and Selmer groups of 
Section \ref{s:ArAppl} (and, we suspect, all arithmetically
relevant Iwasawa modules) will nicely fit the following definition.

\begin{dfn}\label{d:AdmFam}
We say that a family $\{M_n,\aa^n_m,\bb_n^m\}_{m\geqslant n\in \N}$ of 
$\L$-modules and maps 
is an {\em $(\ell,p)$-normic system} if
\begin{enumerate}[{\rm I.}]
\item $I_nM_n=0$ for all $n\in \N$, that is, $M_n$ is a $\L$-module with 
a $\L_n$-module structure induced by the canonical projection 
$\pi_n\colon\L\longrightarrow \L_n$;
\item $\{M_n,\aa^n_m\}_{m\geqslant n\in \N}$ is a direct system of 
$\L$-modules;
\item $\{M_n,\bb^m_n\}_{m\geqslant n\in \N}$ is an inverse system of 
$\L$-modules;
\item for all $n\in \N$, the composition $\bb_n^{n+1} \circ \aa^n_{n+1}$ 
is multiplication by $[G_n:G_{n+1}]$.
\end{enumerate}
\end{dfn} 

\noindent
In particular, the axioms imply that for any $\gamma\in \L$ and 
$x_n\in M_n$ one has
\[ \aa_{n+1}^n(\pi_n(\gamma)x_n)=\aa^n_{n+1} (\gamma\cdot x_n)= 
\gamma\cdot\aa^n_{n+1} (x_n)=\pi_{n+1}(\gamma)\aa_{n+1}^n(x_n). \]
 
\begin{lem}\label{l:AdmMod1}
Let $\{M_n, \aa_{n+1}^n,\bb^{n+1}_n\}_{n\in \N}$ be a normic system of 
$\L$-modules. Then, for every $n\geqslant 0$, there is a decomposition 
as direct sum of $\L$-modules
\begin{equation} \label{e:dcmpMn} 
M_{n+1} = \aa_{n+1}^n(M_n) \oplus \Ker(\bb_n^{n+1}) .
\end{equation}
\end{lem} 

\begin{proof} 
Since $\ell\neq p$, the number $[G_n:G_{n+1}]$ is invertible in $\Lambda$.
Therefore, IV implies $\aa_{n+1}^n(M_n)\cap\Ker(\bb_n^{n+1})=0$ and
\[ \bb_n^{n+1}\left(x-
\frac{\aa_{n+1}^n(\bb_n^{n+1}(x))}{[G_n:G_{n+1}]}\right)=0 \]
for every $x\in M_{n+1}$.
\end{proof} 

\begin{rem} \label{r:MapsAdmMod} 
The same reasoning also shows that the maps $\aa^n_{n+1}$ are injective, 
the maps $\bb_n^{n+1}$ are surjective, and one has 
$M_n\simeq\aa_{n+1}^n(M_n)$ for all $n$. 
\end{rem}

\subsection{Projective limits of normic systems}
In order to get uniformity in the formulas below, we 
tacitly extend a normic system 
$\{M_n,\aa^n_{n+1},\bb^{n+1}_n\}_{n\in\N}$ to $n=-1$ by putting $M_{-1}=0$.

\begin{dfn}\label{d:AdmMod}
We say that a $\L$-module $M$ is {\em normic} if there exists a normic 
system of $\L$-modules $\{M_n,\aa^n_{n+1},\bb^{n+1}_n\}_{n\in \N}$ 
such that 
\[ M\simeq\varprojlim_{n,\bb^n_{n-1}} \, M_n\,.\]
\end{dfn}

\noindent
In the setting of Definition \ref{d:AdmMod}, we shall write 
$\bb_n\colon M\rightarrow M_n$ for the natural projection.

\begin{thm}\label{t:nrmc}
Let $M$ be a normic $\L$-module, associated with the normic system 
$\{M_n,\aa_{n+1}^n,\bb^{n+1}_n\}_{n\in\N}$. Then, there is an isomorphism 
of $\Lambda$-modules
\begin{equation} \label{e:ismnrmc} 
M\simeq\prod_{n\in\N}\Ker(\bb_{n-1}^n) 
\end{equation}
such that 
\begin{equation} \label{e:ismnrmc2} 
e_{[n]}M\simeq\Ker(\bb_{n-1}^n) 
\end{equation}
for all $n\in \N$. 
In particular, every normic $\L$-module is $I_\bullet$-complete.
\end{thm}

\begin{proof}
Without loss of generality, we can assume $M=\varprojlim M_n$.

\noindent
In order to lighten notation, let $g_n:=[G_{n-1}:G_n]$. 

For each $n\in\N$, let
\[ \psi_n\colon M_n \longrightarrow \Ker(\bb^n_{n-1}) \]
be the $\Lambda$-module homomorphism
\[ \psi_n(x):=g_nx-\aa^{n-1}_n\bb^n_{n-1}(x). \]
Note that, with respect to the decomposition \eqref{e:dcmpMn}, 
$\psi_n$ vanishes on $\aa_n^{n-1}(M_{n-1})$ and induces an automorphism 
of $\Ker(\bb^n_{n-1})$.

\noindent
Define
\[ \Psi_n\colon M_n \longrightarrow \bigoplus_{k=0}^n \Ker(\bb^k_{k-1})\]
by
\[ \Psi_n(x)=\big(\psi_k\bb_k^n(x)\big)_{k=0,1,\dots,n} \,. \]
For any pair $m\geqslant n$ in $\N$, the diagram
\begin{equation} \label{e:cdpsin}
\xymatrix{ M_m \ar[rr]^{\Psi_m\qquad} \ar[d]_{\bb_n^m} & & 
\displaystyle{ \bigoplus_{k=0}^m \Ker(\bb^k_{k-1}) }\ar[d]^{\pp_n^m} \\
 M_n \ar[rr]_{\Psi_n\qquad} & & 
 \displaystyle{ \bigoplus_{k=0}^n \Ker(\bb^k_{k-1}) } }\\
\end{equation}
(where $\pp_n^m$ denotes the obvious projection) commutes, because 
of the trivial equalities $(\psi_k\bb_k^n)(\bb_n^m)(x)=\psi_k\bb_k^m(x)$.

\noindent
The maps $\Psi_n$ are isomorphisms for every $n\in\N$. This is clear 
for $\Psi_0=\psi_0$ and is proved inductively for general $n$, using 
the commutativity of \eqref{e:cdpsin} (with $m=n+1$) and the fact that 
$\bb_n^{n+1}$ induces an isomorphism of $\aa_{n+1}^n(M_n)$ into $M_n$.
Therefore, taking inverse limits on both sides of \eqref{e:cdpsin}, 
we obtain
\[ M=\varprojlim M_n\stackrel\sim\longrightarrow
\varprojlim\bigoplus_{k=0}^n \Ker(\bb^k_{k-1})=
\prod_{n\in\N}\Ker(\bb_{n-1}^n). \]
As for \eqref{e:ismnrmc2}, we start by noting that, by construction 
of the idempotents $e_n$, one has
\[ \pi_j(e_k)=1\;\;\text{ if }k\geqslant j \]
and thus, by \eqref{e:enqdr}, 
\[ \pi_j(e_{[k]})=0\;\;\text{ if }k>j. \]
The latter immediately implies 
\[ \bb_j(e_{[k]}M)=\pi_j(e_{[k]})\cdot\bb_j(M)=0\ \text{ for all }k>j,\]
and hence $\bb_n(e_{[n]}M)=\Ker(\bb_{n-1}^n)$.
\end{proof}

\noindent
The proof also shows that one has
\[ M/I_nM\simeq\prod_{j\leqslant n}\Ker(\bb_{j-1}^j) \] 
for every $n\in\N$.

\subsubsection{$I_\bullet$-modules revisited} 
Theorem \ref{t:nrmc} admits the following inverse, which we mention here 
for completeness.

\begin{thm}\label{t:nrmcInv}
Let $M$ be an $I_\bullet$-complete $\L$-module. Then, $M$ is normic.
\end{thm} 

\begin{proof}
Let $M_n=M/I_nM$. By hypothesis $M$ is (canonically) isomorphic
to the inverse limit $\liminv M_n$ defined by the
projection maps $\pi^{n+1}_n\colon M_{n+1}\longrightarrow M_n$. Hence, we
have a natural choice $\bb^{n+1}_n=\pi^{n+1}_n$ (we use $\pi^{n+1}_n$
also for the projection $\L/I_{n+1}\L\longrightarrow \L/I_n\L$,
with both interpretations appearing below).

Now, for any $x\in M_n$, take $y\in M_{n+1}$ such that
$\pi^{n+1}_n(y)=x$ and let $R_n\subset \G$ be a set of
representatives for $G_n/G_{n+1}$. We define 
$\aa_{n+1}^n\colon M_n \longrightarrow M_{n+1}$ by
\[ \aa_{n+1}^n(x):= \sum_{\tau\in R_n} \tau\cdot y.\]
Since $G_{n+1}$ acts trivially on $y\in M_{n+1}$, this definition
is independent of the choice of the set $R_n$. Moreover,
$\Ker(\bb^{n+1}_n)=\pi_{n+1}(I_n)M_{n+1}$ is generated by elements of the form $(\sigma-1)z$, with
$\sigma\in G_n$, which are killed by the operator $\sum_{R_n}\tau$, proving that $\aa_{n+1}^n(x)$ is independent also of the choice of $y$.

The maps $\bb_n^{n+1}$ are $\L$-module homomorphisms, while for the 
maps $\aa_{n+1}^n$ note that $\pi^{n+1}_n(y)=x$ yields 
$$\pi^{n+1}_n(\sigma\cdot y)=\pi^{n+1}_n(\pi_{n+1}(\sigma)y)=
\pi_n(\sigma)\pi^{n+1}_n(y)=\sigma\cdot x$$ 
for all $\sigma\in \G$. Hence
\[ \aa_{n+1}^n(\sigma\cdot x) =
\sum_{\tau\in R_n} \tau\cdot (\sigma\cdot y) =
\sum_{\tau\in R_n} (\sigma\tau)\cdot y = \sigma\cdot\aa_{n+1}^n(x).\] 
Properties I, II and III of Definition \ref{d:AdmFam} are obviously
verified. Regarding property IV, just compute
\begin{align*} (\bb_n^{n+1}\circ\aa_{n+1}^n)(x) & =
\bb_n^{n+1}\bigg(\sum_{\tau\in R_n} \tau\cdot y\bigg) 
= \sum_{\tau\in R_n} \pi^{n+1}_n(\tau\cdot y) \\
\ & = \sum_{\tau\in R_n} \pi_n(\tau)\cdot\pi^{n+1}_n(y)
=[G_n:G_{n+1}]x, 
\end{align*}
because $\tau\equiv 1\pmod{I_n}$ for all $\tau\in R_n$ (i.e.,
$\pi_n(\tau)=1$).
\end{proof}

\section{Iwasawa modules: $\ell$-class groups 
and Selmer groups}\label{s:ArAppl}

From now on, $\G$ will be the Galois group of a pro-$p$ abelian extension 
$\calf/F$ with $F$ a global field (i.e., a number field or the function 
field of a smooth projective connected curve over a finite field). 
We set $F_n=\calf^{G_n}$ as the fixed field of $G_n$, so that
\begin{equation} \label{e:calfFn} 
\calf = \bigcup_n F_n .\end{equation}
The setting is summarized in the following diagram:
\[ \xymatrix{ 
F=F_0 \ar@{-}[rrr]^{\G_n\,\simeq\, \G/G_n} \ar@/{}_{3pc}/@{-}[rrrrrrrr]_\G
\ar@/{}_{1pc}/@{-}[rrrrrr]_{\G_{n+1}\,\simeq\, \G/G_{n+1}} & & & 
F_n\ar@{-}[rrr]^{G_n/G_{n+1}} \ar@/{}^{2pc}/@{-}[rrrrr]^{G_n} & & &
F_{n+1}\ar@{-}[rr]^{G_{n+1}}  & & \calf
}\]

\begin{rem}\label{r:CharFuncFields} 
It is worth recalling that if $F$ is a global field, $\car F\neq p$ and 
$\calf/F$ is a pro-$p$-extension with finite ramification locus, then 
$\Gal(\calf/F)$ is always topologically finitely generated. 
Moreover, if $p\neq\car F> 0$, then the only available pro-$p$-extension is the
arithmetic one (see, e.g., \cite[Proposition 4.3]{BLAnnF} or 
\cite[Proposition 10.3.20]{NSW}).

\noindent
However, in the case $p=\text{char}\,F$, even bounding the ramification there 
are infinitely many $\Z_p^d$-extensions for any finite $d$, and
there are also $\Z_p^\infty$-extensions arising, for example, from the 
torsion of Drinfeld-Hayes modules (see, e.g., \cite{ABBL} and \cite{BC} 
for a study of such extensions in an Iwasawa theoretic context).
\end{rem}

\subsection{The module arising from $\ell$-class groups}\label{s:CLGR}
The notion of class group of $F$ is unambiguous if $F$ is a number field,
while if $\car F\neq0$ one can follow different conventions. 
The most classical one is to consider the group of divisor classes
(restricting to the degree $0$ subgroup if one wants to deal with a 
finite set). Alternatively, one can fix a finite set of places $S$ 
and consider the set of divisor classes for the ring of $S$-integers in 
$F$. The most interesting case, because of its role in function field
arithmetic, is probably when $S$ consists of a single place $\infty$ 
(for example, this is the case studied in \cite{ABBL}).

\noindent
In the following, by class group of $F$ we shall mean any of the finite
groups described above. Accordingly, in the function field case, the 
class group of $F_n$ will be either the class group of $S_n$-integers 
(with $S_n$ the lift of $S$ to $F_n$) or the group of degree $0$ divisor
classes.
\medskip

Let $X(F_n)=:X_n$ be the $\ell$-part of the class group of $F_n$. 
The action of $\Gamma$ on $F_n$ provides a $\Z_\ell[\G_n]$-module 
structure on $X_n$.

\noindent
Let $N^{n+1}_n$ and $i_{n+1}^n$ be the natural norm
$F_{n+1}\longrightarrow F_n$ and inclusion $F_n\longrightarrow F_{n+1}$,
respectively. We use the same notation to denote the $\L$-homomorphisms
they induce on class groups. Obviously for $x\in X_n$ one has
\[ \big(N^{n+1}_n\circ i^n_{n+1}\big)(x) = [F_{n+1}:F_n]\cdot x 
= [G_n:G_{n+1}]\cdot x.\] 
Therefore $\{X_n, i_{n+1}^n, N^{n+1}_n\}_{n\in \N}$ defines a normic 
system and we set
\[ X:=X(\calf) :=\varprojlim X_n \quad \text{with respect to norm maps}. \]
We refer to $X$ as the {\em $\ell$-class group} of $\calf$.

\noindent
The results of Sections \ref{SecSinMod} and \ref{s:NorSys} show that:
\begin{itemize}
\item $X$ is an $I_\bullet$-complete (normic) $\L$-module;
\item for every $n\in \N$,
\begin{equation}\label{e:ClGrSum}  
X/I_nX \simeq X_n \simeq \bigoplus_{m=0}^n \Ker(N_{m-1}^m) 
\simeq \bigoplus_{m=0}^n X_{[m]}
\end{equation}
and, in particular, $\Ker(N_{n-1}^n)\simeq X_{[n]}$;
\item if $\Gamma$ is topologically finitely generated, then $X$ is a 
torsion Sinnott module, satisfying Theorems \ref{t:AdmMod1&2},
\ref{t:GenWash} and \ref{t:GenSinn}   
\end{itemize}

The above results are independent of the characteristic of 
the field $F$. Note that, by Remark \ref{r:CharFuncFields}, the 
$\Lambda$-module $X$ is always Sinnott if $\car F\neq p$ and 
$\calf/F$ ramifies only at finitely many places.

\begin{rem} According to \cite{Ki}, Sinnott originally proved the 
existence of a $p$-adic limit for the sequence of ``minus'' class 
numbers in a cyclotomic $\Z_p$-extension, without restricting to 
the $\ell$-part. Similarly, the later papers \cite{Ki} and \cite{Oz} 
show the existence of a $p$-adic limit for cardinalities of the 
non-$p$-part of class groups in a $p$-adic tower. Such a result can 
be easily obtained also in our context, as follows. For each prime 
number $q$, let $X_{n,q}$ denote the $q$-part of the class group 
of $F_n$ and define $Y_n:=\bigoplus_{\ell\neq p}X_{n,\ell}$. The 
non-$p$-part of the class number is
\[ |Y_n|=\prod_{\ell\neq p}|X_{n,\ell}|, \]
where for any fixed $n$ one has $|X_{n,\ell}|=1$ for almost all $\ell$. 
By \eqref{e:cngrmdpn} we get 
\[ |X_{n+1,\ell}|\equiv|X_{n,\ell}|\pmod{p^{n+1}} \]
for every $\ell$ and $n$, and therefore 
$|Y_{n+1}|\equiv|Y_n|\pmod{p^{n+1}}$ for all $n$. 
(This is essentially the same reasoning used in the reduction of
\cite[Theorem 2]{Oz} to \cite[Proposition 1]{Oz}.)
\end{rem}

\subsubsection{A criterion for increasing the class number}
By \eqref{e:ismnrmc2}, we have $X_{[n+1]}\simeq\Ker(N^{n+1}_n)$.
Therefore, to provide a non-trivial element of $X_{[n+1]}$ we need
a non-trivial class $\bar x\in X_{n+1}$ such 
that its norm is principal. Let $x$ be an ideal or a divisor 
representing $\bar x$. Without loss of generality we can assume no 
place in the support of $x$ ramifies in $F_{n+1}/F_n$. Then, our 
assumptions imply that at least one place in the support of 
$N^{n+1}_n(x)$  splits in $F_{n+1}$ (if not, the triviality of 
$N^n_{n+1}(\bar x)$ would imply the same for $\bar x$, since $\ell$ 
does not divide $[F_{n+1}:F_n]$).

\noindent
Specializing to a $\Z_p$-extension and to class groups of rings, one 
obtains the following result.

\begin{prop} \label{p:WshinfX}
Let $F$ be a global field and $S$ a finite set of places of $F$ 
(containing the archimedean ones). Assume $\calf/F$ is a $\Z_p$-extension 
of global fields and let $X_n$ be the $\ell$-part of the class group 
of $A_n$, the ring of $S$-integers in $F_n$. Take $X=\varprojlim X_n$, 
then the module $X_{[n+1]}$ is non-trivial if and only if there exists 
a prime ideal\, $\gotp_n\subset A_0$ such that:
\begin{enumerate}[{\rm 1.}]
\item $\gotp_n$ is totally split in $F_{n+1}/F$;
\item for all $0\leqslant m\leqslant n$, all primes of $A_m$ lying 
above $\gotp_n$ are principal;
\item the primes of $A_{n+1}$ lying over $\gotp_n$ are 
not principal. 
\end{enumerate}
\end{prop}

\begin{proof} The module $X_{[n+1]}$ is non-trivial if and only if 
there is some $x$ as described above. Now note that: 
\begin{itemize}
\item in this setting, the representative $x$ can be taken to be an 
ideal $\gotq_{n+1}\subset A_{n+1}$;
\item by the Chebotarev Density Theorem, $\gotq_{n+1}$ can be assumed 
to be prime;
\item if $N_n^{n+1}(\gotq_{n+1})$ is principal, then so is 
$N_m^{n+1}(\gotq_{n+1})$ for all $m\leqslant n+1$;
\item in a $\Z_p$-extension, if a prime splits between level $n$ 
and $n+1$, it must be totally split at all levels up to $n+1$.
\end{itemize}
Thus $\gotp_n:=N_0^{n+1}(\gotq_{n+1})$ has the required properties. 
\end{proof}

\begin{rems}\label{r:ArithMod1}\  
\begin{enumerate}[{\bf 1.}] 
\item Obviously $X$ is infinite if and only if $X_{[n+1]}\neq 0$ 
for infinitely many $n$. Therefore, in the setting of Proposition
\ref{p:WshinfX}, the module $X$ is infinite if and only if there exist
infinitely many $n\geqslant 0$ for which one can find a principal prime 
$\mathfrak{p_n}$ of $A_0$ with the properties listed above.

\smallskip
\item One could compare Proposition \ref{p:WshinfX} with 
\cite[Theorem 6]{Wa75} where, for any $i\in \N-\{0\}$, the author 
builds a (non-cyclotomic) $\Z_p$-extension $\calf_i=\bigcup_n F_{i,n}$ 
of a number field $F_i$ such that $v_\ell(|X_{i,n}|)\geqslant ip^n$
for all $n\geqslant 0$ (where $X_{i,n}$ is the $\ell$-part of the
class group of $F_{i,n}$). A crucial step in the construction is 
the existence of a $\Z_p$-extension $\mathcal{K}$ of an imaginary 
quadratic field $k$ with infinitely many totally split primes. 
Roughly speaking, $F_i$ is of type $k(\bmu_\ell,\sqrt[\ell]{\alpha_i})$,
at least $i+[F_i:\Q]$ of the totally split primes (in $\mathcal{K}/k$)
are totally ramified in $F_i/k$
(the number of ramified primes obviously depends on the choice 
of $\alpha_i$), and $\calf_i=\mathcal{K}F_i$. 

\smallskip
\item In a cyclotomic $\Z_p$-extension $F_{cyc}/F$ no prime is totally 
split and \cite[Theorem]{Wa78} proves that, for an abelian number 
field $F$, the $\Lambda$-module $X(F_{cyc})$ is finite. It would
be interesting to investigate more closely the relation between the
finiteness of the $\ell$-class group and the presence of totally split
primes. We expect this to require more arithmetic information, which may be 
obtained via an approach involving also the analytic side of Iwasawa theory.

\smallskip
\item The fact that there exist cyclotomic $\Z_p$-extensions of number 
fields with infinite $p$-part of the class group shows that, in general,
there seems to be no relation between the structure of the $p$-part
and of the $\ell$-parts (compare with Section \ref{s:ArithMod2}). 
\end{enumerate}
\end{rems}

\subsection{The module arising from $\ell$-Selmer groups} \label{SecSelmer}
The situation is very similar for the other classical example of 
Iwasawa modules: the Selmer groups of an abelian variety. Let $A$ be 
an abelian variety over our global field $F$. 
For any $n\geqslant 0$, let $A[\ell^{\,n}]$ be the group scheme of 
$\ell^{\,n}$-torsion points and, for any extension $K/F$, 
let $A(K)$ (respectively, $A[\ell^{\,n}](K)$) be the group of 
$K$-rational points of $A$ (respectively, of $A[\ell^{\,n}]$). We set 
$A[\ell^{\,\infty}]:=\varinjlim A[\ell^{\,n}]$.

\noindent
For now, we assume $\car F\neq\ell$ (the other case will be briefly
discussed in subsection \ref{sss:carFell}). Consider the Galois 
cohomology groups 
\[ H^i(K,A[\ell^\infty])=
H^i\left(\Gal(K^{\text{sep}}/K),A[\ell^\infty](K^{\text{sep}})\right). \]
For any finite extension $K/F$, we define the {\em $\ell$-Selmer group} 
for $A$ over $K$ as
\begin{equation} \label{e:dfnslm} 
\Sel_\ell(A/K) : = \Ker\left( H^1(K,A[\ell^{\,\infty}])
\xrightarrow{\ \text{ res}_K\ } 
\prod_{v\in \mathscr{P}(K)} 
H^1(K_v,A[\ell^{\,\infty}])/{\rm im}(\kappa_v) \right) , 
\end{equation}
where $\mathscr{P}(K)$ is the set of all places of $K$, $K_v$ is the
completion of $K$ at $v$, $\im(\kappa_v)$ is the image of the Kummer 
map arising from the cohomology of the exact sequence of $\ell$-power 
torsion points, and the map ${\rm res}_K$ is the product of the 
canonical restrictions induced by $\text{Spec}\,K_v 
\longrightarrow \text{Spec}\,K$. The Selmer group $\Sel_\ell(A/\calf)$ 
is the direct limit of $\Sel_\ell(A/K)$ as $K$ varies among finite
subextensions of $\calf/F$.

\noindent
Recall we fixed a filtration \eqref{e:calfFn}.

\begin{lem}\label{l:SelInv}
For every $n\geqslant 0$, we have
\[ \Sel_\ell(A/F_n)\simeq \Sel_\ell(A/\calf)^{G_n}. \]
\end{lem}

\begin{proof} Consider the diagram 
\begin{equation} \label{e:dg1} 
\xymatrix{ \Sel_\ell(A/F_n) \,\ar@{^(->}[r] \ar[d] & 
H^1(F_n,A[\ell^{\,\infty}]) \ar@{->>}[r] \ar[d] & 
\im(\text{res}_{F_n}) \ar[d] \\
\Sel_\ell(A/\calf)^{G_n} \ar@{^(->}[r] & 
H^1(\calf,A[\ell^{\,\infty}])^{G_n} \ar[r]  &
\left(\im(\text{res}_\calf)\right)^{G_n} , } 
\end{equation}
where the vertical arrows are canonical restrictions. 
The Hochschild-Serre spectral sequence for the central and right 
vertical arrows, the fact that $\ell\neq p$, and the snake lemma show 
that the left vertical arrow is an isomorphism. (The reasoning is the 
same as in the proof of \cite[Theorem 4.16]{BLAnnF}, to which we 
refer for details.)
\end{proof}

In particular, for any $n\geqslant 0$, we have
\[ \Sel_\ell(A/F_n)\simeq \Sel_\ell(A/F_{n+1})^{G_n/G_{n+1}} . \]
Hence the restriction maps are injective. Composing with corestriction, 
one has the multiplication-by-$[F_{n+1}:F_n]$ map
\[ H^1(F_n,A[\ell^{\,\infty}]) \xrightarrow{\ {\rm res}_{n+1}^n\ } 
H^1(F_{n+1},A[\ell^{\,\infty}]) \xrightarrow{\ {\rm cor}_n^{n+1}\ } 
H^1(F_n,A[\ell^{\,\infty}]) ,\]
inducing the same map on the corresponding $\ell$-Selmer groups.

\noindent
Let 
\[\mathcal{S}_n:=\Sel_\ell(A/F_n)^\vee \]
be the Pontrjagin dual of $\Sel_\ell(A/F_n)$.
Dualizing, the composition
\[ \mathcal{S}_n \xrightarrow{\ ({\rm cor}^{n+1}_n)^\vee\ }
\mathcal{S}_{n+1} \xrightarrow{\ ({\rm res}_{n+1}^n )^\vee\ } 
\mathcal{S}_n \]
is also multiplication-by-$[F_{n+1}:F_n]$.
Therefore $\{ \mathcal{S}_n, ({\rm cor}^{n+1}_n)^\vee,
({\rm res}_{n+1}^n )^\vee\}_{n\in \N}$
defines a normic system of $\L$-modules and
\[ \mathcal{S}:= \varprojlim_{n,({\rm res}_{n+1}^n )^\vee} \mathcal{S}_n \]
is $I_\bullet$-complete.
 
\noindent
The results of Sections \ref{SecSinMod} and \ref{s:NorSys} show that:
\begin{itemize}
\item for every $n\in \N$,
\[ \mathcal{S}/I_n\mathcal{S} \simeq \mathcal{S}_n \simeq 
\bigoplus_{m=0}^n \Ker\! \left(({\rm res}_{m-1}^m)^\vee\right) 
\simeq \bigoplus_{m=0}^n \mathcal{S}_{[m]}  \]
and, in particular, $\Ker\!\left(({\rm res}_{n-1}^n)^\vee\right) \simeq 
\mathcal{S}_{[n]}$;
\item if $\Gamma$ is topologically finitely generated, then $\cals$ is 
a Sinnott module (because $\cals_{[\omega]}$ is a direct summand of 
$\cals_n$ when $\omega\in\Gamma_n^\vee$, and each $\cals_n$ is a 
finitely generated $\Z_\ell$-module), so that Theorems \ref{t:AdmMod1&2},
\ref{t:GenWash} and \ref{t:GenSinn} apply (with due restrictions 
if $\cals$ is not torsion).
\end{itemize}

\begin{rem}\label{RemDualSel} 
Since $\Sel_\ell(A/F_n)$ is a direct summand of $\Sel_\ell(A/F_{n+1})$, 
there is a natural isomorphism 
\[ \left(\Sel_\ell(A/F_{n+1})/\Sel_\ell(A/F_n)\right)^{\!\vee} 
\simeq \mathcal{S}_{n+1}/\mathcal{S}_n
= \Sel_\ell(A/F_{n+1})^\vee/\Sel_\ell(A/F_n)^\vee .\] 
\end{rem}

\subsubsection{The case $\car(F)=\ell$} \label{sss:carFell} 
The reasoning discussed above works in this setting as well, leading 
to the same results, after a few small changes that we quickly explain. 

Mainly, one has to replace Galois cohomology with the flat one: that 
is, given an extension $K/F$, now we denote by $H^i(K,A[\ell^\infty])$ 
the $i$-th flat cohomology group for $\text{Spec}\,K$ with 
values in the group scheme $A[\ell^\infty]$, and then proceed to define 
$\Sel_\ell(A/\calf)$ as before. This is compatible with the previous
notation, because if $\text{char}\,F\neq \ell$, then there is a 
canonical isomorphism between flat and \'etale (i.e., Galois) 
cohomology (see, e.g., \cite[III, Theorem 3.9]{Mln}).

Our Lemma \ref{l:SelInv} still holds, with the same proof, since 
diagram \eqref{e:dg1} follows from the general cohomological machinery, 
and kernels and cokernels arising from the Hochschild-Serre exact 
sequence consist of Galois cohomology groups (see 
\cite[III, Remark 2.21]{Mln}). As for the fact that 
${\rm res}_{n+1}^n\circ{\rm cores}_n^{n+1}$ is again 
multiplication-by-$[F_{n+1}:F_n]$ in this setting, we refer 
to \cite[Th\'eor\`eme 6.2.3 (Var 4) and Proposition 6.3.15]{SGA4} 
(see also \cite[Section 0.4]{Gil}).

\subsubsection{A comparison with the $p$-Selmer group}\label{s:ArithMod2} 
For any finite extension $K/F$, the $p$-Selmer group $\Sel_p(A/K)$ is 
defined as in \eqref{e:dfnslm}, replacing $A[\ell^\infty]$ with 
$A[p^\infty]$ (and using flat cohomology if $\car(F)=p$). 
For any prime $q$ there is the well-known exact sequence
involving Selmer and Tate-Shafarevich groups
\begin{equation} \label{e:sha} 
0 \rightarrow A(F_n)\otimes\Q_q/\Z_q \longrightarrow \Sel_q(A/F_n) 
\longrightarrow \sha(A/F_n)[q^\infty] \rightarrow 0.  
\end{equation}
By the Mordell-Weil theorem, the torsion of $A(F_n)$ is finite, and thus 
$\rank_\Z A(F_n)$ is the same as the $\Z_q$-corank of 
$A(F_n)\otimes\Q_q/\Z_q$. Moreover, if we assume that the groups 
$\sha(A/F_n)$ are always finite, then, considering \eqref{e:sha} for 
$q=p$ and $q=\ell$ respectively, one gets
\[ \rank_{\Z_p}\Sel_p(A/F_n)^\vee=\rank_\Z A(F_n)=
\rank_{\Z_\ell}\Sel_\ell(A/F_n)^\vee .\]
Therefore, in this setting, Theorem \ref{t:AdmMod1&2} provides estimates 
on the growth of the $\Z$-rank of the abelian variety in the tower
of the extensions $F_n$. Such estimates are not as precise as, for
example, \cite[Proposition 1.1]{MR} and \cite[Theorem 9]{TanSlmr}, 
but are completely independent from additional hypotheses on the
number or type of reduction of ramified primes.

On the other hand, \cite[Theorem 9]{TanSlmr} gives us information on
the type of $\L$-modules we might get via Selmer groups. Indeed,
assume $\Gamma\simeq\Z_p^d$, $F_n$ is the fixed field of $\Gamma^{p^n}$, 
$\calf/F$ is unramified outside a finite set of places, and $A$ has good
ordinary reduction at all such places. Under these hypotheses, 
K.-S.~Tan has proved the growth formula
\begin{equation} \label{e:tan1} 
\rank_{\Z_p}\Sel_p(A/F_n)^\vee=\kappa_1 p^{nd}+O(p^{n(d-1)}) 
\end{equation}
for some constant $\kappa_1\in\N$ (see \cite[Theorem 9]{TanSlmr} 
together with \cite[Theorem 5]{TanCntr}). Looking back at Theorem
\ref{t:AdmMod1&2} and Example \ref{ex:LimRegMod}, still
assuming the finiteness of the $\sha(A/F_n)$, we see that in the case $d=1$ 
this growth is possible only for $\Z_\ell$-stable modules.

\section{Main Conjecture: class groups of function fields}\label{s:StickEl}
In this section $F$ is a global function field of characteristic $p$ (we shall briefly deal with $\car(F)\neq p$ in Remark \ref{r:arbchrct}). The symbol $q$ will denote the cardinality of the constant field of $F$. 

As in Section \ref{s:ArAppl}, we consider an abelian pro-$p$-extension $\calf/F$ with Galois group $\G$. However, now we also fix a finite and nonempty set $S$ of places of $F$ and ask that our extension is unramified outside $S$.

For the algebraic side of our Main Conjecture, we will consider class groups, as in Section \ref{s:CLGR}. 
More precisely, to fix ideas, let $X_n$ be the $\ell$-part
of the group of divisor classes of degree $0$ of $F_n$ and let 
$X:=\displaystyle{\varprojlim X_n}$ (with respect to norms).
 
For any place $v\notin S$ we denote by $\Fr_v\in\Gamma$ the {\em geometric Frobenius} at $v$ in $\G$.

\subsection{Stickelberger series and Artin $L$-series}

\begin{dfn}\label{d:StickSer}
The {\em Stickelberger series} for $\calf/F$ is
\begin{equation}\label{eq:StSer1}
\Theta_{\calf/F,S}(u):=\prod_{v\not\in S} (1-\Fr_vu^{\deg v})^{-1} 
\in \Z[\G][[u]] .  
\end{equation}
\end{dfn}

\begin{rems} \label{Rems:StSer} \
\begin{enumerate}[{\bf 1.}]
\item The series in \eqref{eq:StSer1} is well defined, since, for any $n\in \N$,
there are only finitely many places of $F$ of degree $n$. 
\item Let $\calf_S$ be the maximal abelian extension of $F$ unramified outside $S$.
Since $\calf\subseteq \calf_S$, the series \eqref{eq:StSer1} is the 
projection to $\Z[\G][[u]]$ of the Stickelberger series 
\[ \Theta_{\calf_S/F,S}(u)\in \Z[\Gal(\calf_S/F)][[u]] \]
introduced in \cite[Section 3]{ABBL} and \cite[Section 2]{BC} in order to interpolate various $L$-functions (complex, $p$-adic and characteristic $p$).
In the current paper we just want to relate classical Artin $L$-functions to an element of our $\Lambda$, so there is no need of either fixing a place $\infty\in S$ or passing through $\calf_S$. \\
{\bf Caveat}: in comparing \eqref{eq:StSer1} with the analogous formulae in \cite{ABBL} and \cite{BC}, one should keep in mind that here we are using the geometric Frobenius, while those papers deploy the arithmetic one.
\item For any open subgroup $U$ of $\G$, let $\calf^U$ be the fixed field of $U$.
We define the Stickelberger series for $\calf^U/F$ as
\[ \Theta_{\calf^U/F,S}(u):=\prod_{v\not\in S} 
(1-\pi^\G_{\G/U} (\Fr_v)u^{\deg v})^{-1} \in \Z[\G/U][[u]] ,\]
where $\pi^\G_{\G/U}\colon \Z[\G]\longrightarrow \Z[\G/U]$ is the canonical
projection (we use the same notation for its extension to $\Z[\G][[u]]$). It is easy to see that 
\[ \Theta_{\calf^U/F,S}(u) = \pi^\G_{\G/U} (\Theta_{\calf/F,S}(u)) 
\ \ \text{ and }\ \
\Theta_{\calf/F,S}(u) = \varprojlim_U \Theta_{\calf^U/F,S}(u) .\]
\end{enumerate}
\end{rems}

\subsection{$L$-series and character evaluation}
For comparison with complex $L$-func\-tions, we fix an embedding $\ov\Q\iri\C$. Thus, we can think of characters $\chi\in\Gamma^\vee$ as taking values in $\ov\Q_\ell$ or in $\C$, according to convenience. As usual, $\chi_0$ denotes the trivial character.

Let $I_v\subset\Gamma$ denote the inertia at $v$.
For any $\chi\in \G^\vee$, we define as usual
\[ \chi(v):=\left\{ \begin{array}{ll}
\chi(\Fr_v) & \text{ if } \chi(I_v)=1,\\
\ & \\
0 & \text{ if } \chi(I_v)\neq 1, \end{array} \right. \]
where for $v\in S$ we take as $\Fr_v\in\Gamma$ any representative of the geometric Frobenius at $v$.

\begin{dfn}\label{d:ArtinLSer}
The {\em Artin $L$-function} for $\chi$ and $F$ is 
\begin{equation}\label{e:ArtinLSer1}
L(s,\chi):=\prod_{v} (1-\chi(v)q^{-s\deg v})^{-1}, 
\end{equation}
where the product is over all places $v$ of $F$ and $s$ is a complex variable. In particular, $L(s,\chi_0)=\zeta_F(s)$  is the classical zeta-function for $F$.

\noindent
The {\em Artin $S$-truncated $L$-function} is
\begin{equation}\label{e:ArtinLSer2}
L_S(s,\chi):=\prod_{v\not\in S} (1-\chi(v)q^{-s\deg v})^{-1}. 
\end{equation}
\end{dfn}

Let $\L\langle u\rangle$ be the Tate algebra of $\L[[u]]$, i.e., the subring
of power series whose coefficients tend to 0. In particular $\L\langle u\rangle$
contains all power series whose image in $\Z_\ell[\G_n][[u]]$ is a polynomial in $u$ for all $n\in \N$.

From now on, we also fix an auxiliary place  $v_0\not\in S$.
The following is the analog of \cite[Proposition 3.2]{ABBL}.

\begin{prop}\label{p:StickTateAlg}
The modified Stickelberger series
\[\Theta_{\calf/F,S,v_0}(u):= \Theta_{\calf/F,S}(u) \cdot 
(1-\Fr_{v_0} \cdot (qu)^{\deg v_0}) \]
belongs to $\L\langle u\rangle$. 
\end{prop}

\begin{proof}
Any character $\chi \in \G^\vee$ induces 
a ring homomorphism $\chi\colon\Z[\Gamma][[u]]\rightarrow\C[[u]]$, which in turn yields an equality of formal Dirichlet series
\begin{align} \label{e:interp}
\chi(\Theta_{\calf/F,S})(q^{-s}) & = \prod_{v\not\in S} (1-\chi(v)
q^{-s\deg v})^{-1} = L_S(s,\chi) \\
\ & =L(s,\chi)\prod_{v\in S} (1-\chi(v)q^{-s\deg v}). \nonumber
\end{align}

As well known, $L(s,\chi)$ converges for $\text{Re}(s)>1$; furthermore, it is a polynomial in $\Z[\chi(\G)][q^{-s}]$
for $\chi\neq \chi_0$, by \cite[Chapter VII, \S 7, Theorem 6]{Weil}.
As for $\chi_0$, by \cite[Chapter VII, \S 6, Theorem 4]{Weil}, we have
\[ L(s,\chi_0)=\frac{P(q^{-s})}{(1-q^{-s})(1-q^{1-s})} \]
with $P(u)\in \Z[u]$. Therefore, 
\begin{align*} 
\chi_0(\Theta_{\calf/F,S,v_0})(q^{-s}) & = L_S(s,\chi_0)\cdot 
(1-q^{(1-s)\deg v_0})  \\
\ & = L(s,\chi_0)\cdot (1-q^{(1-s)\deg v_0})
\prod_{v\in S} (1-q^{-s\deg v}) 
\end{align*}
is a polynomial in $\Z[q^{-s}]$ (because $S\neq \emptyset$).  

Thus, for any $n$ and  $\chi\in\Gamma_n^\vee$, we have
$$\chi(\Theta_{\calf^{G_n}/F,S,v_0})(u)=\chi(\Theta_{\calf/F,S,v_0})(u)\in\Z[\chi(\G)][u]=\Z[\chi(\Gamma_n)][u]\,.$$
This is possible only if $\Theta_{\calf^{G_n}/F,S,v_0}(u)$ belongs to $\Z[\Gamma_n][u]\subseteq\Z_\ell[\G_n][u]$ for all $n$. \normalcolor
\end{proof}

\begin{rems}\label{r:ModStickCoeff} \
\begin{enumerate}[{\bf 1.}]
\item Since $\chi(\Theta_{\calf/F,S})(q^{-s}) = L_S(s,\chi)$ 
for any $\chi$, we can also use the idempotent decomposition to define the 
Stickelberger series as the unique element of $\Z[\G][[u]]$ such that
for any $s\in \C$, $\text{Re}(s)>1$, we have
\begin{equation} \label{e:idmpdcmptht} \Theta_{\calf/F,S}(q^{-s}) = \sum_{\chi\in\Gamma^\vee} L_S(s,\chi) e_\chi .
\end{equation}
(Here $e_\chi$ is the inverse limit of the idempotents $e_{\chi,n}$, taken over all $n$ such that $\chi\in\Gamma_n^\vee$. The sum on the right-hand side of \eqref{e:idmpdcmptht} converges with respect to the inverse limit topology on $\C[[\Gamma]]=\varprojlim\C[\Gamma_n]$.) \\
For a similar approach and other recent application of Stickelberger ideals
to Main Conjectures (for $\ell=p$) see \cite[Section 3]{BP}.
\item Contrary to the original \cite[Proposition 3.2]{ABBL}, we cannot neglect
the correction term $(1-\Fr_{v_0} (qu)^{\deg v_0})$ here, because it is not
invertible in the Tate algebra, since $q^n$ does not converge to 0 in $\Z_\ell$.
\end{enumerate}
\end{rems}

\subsection{The Stickelberger element and the Main Conjecture}
Thanks to Proposition \ref{p:StickTateAlg}, we can specialize the modified Stickelberger series
at $u\mapsto 1$ (corresponding to evaluation at $s=0$ for the $L$-functions), so to obtain an element of $\Lambda$. 
By \eqref{e:struttura} and \eqref{e:idmpdcmptht}, we find that the $[\omega]$-component of $\Theta_{\calf/F,S,v_0}(1)$ is
\begin{align*} e_{[\omega]}\Theta_{\calf/F,S,v_0}(1) & =  \sum_{\chi\in [\omega]} e_\chi\Theta_{\calf/F,S,v_0}(1)  =  \sum_{\chi\in [\omega]} \chi(\Theta_{\calf/F,S,v_0})(1)e_\chi \\ 
& =  \sum_{\chi\in [\omega]} L_S(0,\chi)e_\chi. 
\end{align*}
However, as hinted in Section \ref{ss:ExtConstMC}, a different element of $\Lambda$ is more suitable for comparison with the characteristic ideal of $X$.

\begin{dfn}\label{d:ModStickEl}
The {\em modified (at $v_0$) Stickelberger element} is the unique $\theta_{\calf/F,S,v_0}$ in $\Lambda= \prod_{[\omega]\in\mathscr{R}} \L_{[\omega]}$ such that
\[ e_{[\omega]}\cdot\theta_{\calf/F,S,v_0}=(\theta_{\calf/F,S,v_0})_{[\omega]}:= 
\prod_{\chi\in [\omega]} \chi(\Theta_{\calf/F,S,v_0})(1)\;\;\;\;\text{ for all }[\omega]\in\mathscr{R}. \]
\end{dfn}

Let
\[ S_\chi:=\{v\in S\,:\,\chi(v)=1\}
=\{ v\in S\,:\, v\text{ splits completely in }\calf^{\Ker \chi}/F\}\]
(note that $S_{\chi_0}=S)$, and define
\begin{equation}\label{e:Defzn} 
z_n:=\sum_{\chi\in \G_n^\vee} |S_{\chi}|, 
\end{equation}
which will represent the number of {\em exceptional zeros 
(of level $n$)} for our Stickelberger series. \medskip

A first step towards a relation between the Stickelberger element and the order of the group
of divisor classes of degree 0 is provided by the following theorem.

\begin{thm}\label{t:StickEl+ClNum}
In the above setting, for all $n\geqslant 0$, we have
\begin{equation} \label{e:clssnmb} \lim_{s\rightarrow 0}\, (1-q^{-s})^{1-z_n} \prod_{\chi\in \G_n^\vee}
\chi(\Theta_{\calf/F,S,v_0})(q^{-s}) = h(F_n)\cdot\frac{\Omega_{n,v_0}\Omega_{n,S}}{1-q}, 
\end{equation}
where $h(F_n)$ is the order of the group of classes of divisors of degree 0 of $F_n$,
\[ \Omega_{n,v_0}= \prod_{\chi\in \G_n^\vee} (1-\chi(v_0)q^{\deg v_0}) \]
and
\[ \Omega_{n,S}=\prod_{\chi\in \G_n^\vee} \left( 
 \prod_{v\in S_{\chi}} \deg v  \prod_{v\in S-S_\chi} (1-\chi(v)) \right).\]
\end{thm}

\begin{proof}
For all $n\geqslant 0$ and $\chi\in\Gamma_n^\vee$, we have, by \eqref{e:interp},
\begin{align*}
\chi(\Theta_{\calf/F,S,v_0})(q^{-s}) & = L_S(s,\chi) (1-\chi(v_0)q^{(1-s)\deg v_0}) \\
& = L(s,\chi) (1-\chi(v_0)q^{(1-s)\deg v_0}) \prod_{v\in S} (1-\chi(v)q^{-s\deg v}). 
\end{align*}
Assume $\chi\neq\chi_0$. By the definition of $S_\chi$ and the well-known fact that $L(0,\chi)\neq 0$ (see \cite[Corollary 2 to Proposition 14.9]{Ros}), 
the product above has a zero of order $|S_\chi|$ at $s=0$. Moreover
\[ \lim_{s\rightarrow 0}
\frac{\chi(\Theta_{\calf/F,S,v_0})(q^{-s})}{(1-q^{-s})^{|S_\chi|}}=  
L(0,\chi) (1-\chi(v_0)q^{\deg v_0})\cdot \prod_{v\in S_\chi} \deg v
\cdot \prod_{v\in S-S_\chi} (1-\chi(v)) .\]
For the trivial character we have
\[ \chi_0(\Theta_{\calf/F,S,v_0})(q^{-s}) = 
\zeta_F(s) (1-q^{(1-s)\deg v_0}) \prod_{v\in S} (1-q^{-s\deg v}).\]
Since $\zeta_F$ has a pole of order 1 at $s=0$, the last term has a zero of 
order $|S|-1$ at $s=0$. Moreover, 
\[ \lim_{s\rightarrow 0}\,
\frac{\chi_0(\Theta_{\calf/F,S,v_0})(q^{-s})}{(1-q^{-s})^{|S|-1}}
=h(F)\cdot \frac{1-q^{\deg v_0}}{1-q} \prod_{v\in S} \deg v ,\]
where $h(F)$ is the order of the group of divisor classes of degree 0 of $F$
(see, e.g., \cite[Theorem 5.9]{Ros}).

Finally, recall that the characters of $\G_n^\vee$ correspond to the field $F_n$. Hence one has
\[ h(F)\cdot\prod_{\chi\in \G_n^\vee-\{\chi_0\}} L(0,\chi) = h(F_n) \]
by  \cite[Corollary 1 to Proposition 14.9]{Ros}. 

Putting all together we obtain \eqref{e:clssnmb}.
\end{proof}
 
We are only interested in the $\ell$-part, hence only in 
the $\ell$-adic valuation of the right-hand side of \eqref{e:clssnmb}. We mention a few additional hypotheses which reduce it to the $\ell$-adic valuation
of $h(F_n)$, and will enable us to provide examples for a Main Conjecture. 
\begin{itemize}
\item[{\bf H1}] Assume there is a place of $F$ which is totally split in $\calf/F$ and
take it as our $v_0$. Then, $\chi(v_0)=1$ for all $\chi$. If, moreover, $1-q^{\deg v_0}\not\equiv 0 \pmod{\ell}$, then 
\[ v_\ell\left( \frac{\Omega_{n,v_0}}{1-q}\right) = 0 \,.\]
\item[{\bf H2}] Assume that $S$ consists of all ramified places and that all of them are 
totally ramified. Then, for all $\chi\neq \chi_0$, one has $S_\chi=\emptyset$
and $\chi(v)=0$ for all $v\in S$, i.e.,
\[ \prod_{v\in S_\chi} \deg v\cdot \prod_{v\in S-S_\chi} (1-\chi(v)) = 1\,.\]
Besides, assume  that $\ell \nmid \deg v$ for all $v\in S$, then
\[ v_\ell\left( \prod_{v\in S} \deg v\right) =0 \,.\]
When all these conditions are verified one has $v_\ell(\Omega_{n,S})=0$.
\end{itemize}

\subsection{A weak IMC for the $\gotp$-cyclotomic extension}
Our goal is to formulate an Iwasawa Main Conjecture (IMC) 
relating the characteristic ideal of $X$ to the Stickelberger element
of Definition \ref{d:ModStickEl}. 
We start by providing some evidence for a weaker version of such conjecture 
in some special extensions. 

Our first set of examples comes from the theory of Drinfeld modules (see \cite{hay} for a reference covering all we use). We fix a place
$\infty$ and let $A\subset F$ be the ring of functions regular away from $\infty$. We also fix a sign function $sgn$. This determines a finite abelian extension $H_A^+/F$, where the only ramified place is $\infty$ (see \cite[\S14]{hay}). Finally we choose a $sgn$-normalized rank $1$ 
$A$-Drinfeld module (i.e., a Drinfeld-Hayes module) $\Phi$ and a second place corresponding to a prime ideal $\gotp\subset A$. Let $H_n$ be the extension obtained 
by adding to $H_A^+$ the $\gotp^n$-torsion of $\Phi$, 
and put $\calf_\gotp=\bigcup_{n\in\N}H_n$. Then $\calf_\gotp/F$ is an 
abelian Galois extension unramified outside $\{\gotp,\infty\}$, 
where the decomposition and inertia groups at $\infty$ coincide 
and have finite order $q^{\deg\infty}-1$. Moreover one has an 
isomorphism $\Gal(\calf_\gotp/H_A^+)\simeq\varprojlim (A/\gotp^n)^*$,
 where $\Gal(H_1/H_A^+)\simeq(A/\gotp)^*$ and $\Gal(\calf_\gotp/H_1)$ 
 is isomorphic to $\Z_p^\infty$. (All of this can be found in or 
 deduced from \cite[\S16]{hay}.) 

\begin{prop}\label{p:CycExt}
Let $\calf_\gotp^{(p)}/F$ be the maximal pro-$p$-subextension of $\calf_\gotp/F$.
Assume that $\calf/F$ satisfies
\begin{enumerate}[{\rm 1.}]
\item $\calf\subset \calf_\gotp^{(p)}$;
\item $p$ does not divide the class number of $A$;
\item $q^{\deg\infty}\not\equiv 1 \pmod{\ell}$;
\item $\ell \nmid \deg \mathfrak{p}$.
 \end{enumerate}
Then, one has an equality (as ideals in $\Z_\ell$)
\[ \left(\prod_{\chi\in \G_n^\vee} 
\chi(\Theta_{\calf/F,\mathfrak{p},\infty})(1)\right) 
= \big(h(F_n)\big)=(|X_n|). \]
\end{prop}

\begin{proof}
This is a consequence of Theorem \ref{t:StickEl+ClNum}. By \cite[Theorem 14.1]{hay}, assumption 2 implies that $p$ is coprime to $[H_A^+:F]$ and therefore $\Gal(\calf_\gotp/H_1)\simeq\Gal(\calf_\gotp^{(p)}/F)$. It follows that $\gotp$ is the only ramified place in $\calf/F$, so we put $S=\{\gotp\}$. Together with assumption 4, this implies {\bf H2}. Also, the place $\infty$ is totally split in $\calf/F$, because $p$ is coprime with $q^{\deg\infty}-1$. This and assumption 3 imply {\bf H1}. Moreover, $z_n=1$ for all $n$.
\end{proof} 

In particular, recalling Definition \ref{d:ModStickEl}, we have

\begin{thm}[Weak IMC for 
$\calf\subset\calf_\gotp^{(p)}$]\label{t:WMC}
In the setting of Proposition \ref{p:CycExt}, the equalities (as ideals in $\Z_\ell$)
\[\prod_{[\omega]\in \mathscr{R}_n} {\rm Ch}_{\L_{[\omega]}}(X_{[\omega]}) =
\left(\prod_{[\omega]\in \mathscr{R}_n}
(\theta_{\calf/F,\mathfrak{p},\infty})_{[\omega]}\right) \]
and
\begin{equation}\label{e:WMC}
\prod_{[\omega]\in \mathscr{R}_n-\mathscr{R}_{n-1}} 
{\rm Ch}_{\L_{[\omega]}}(X_{[\omega]}) = 
\left(\prod_{[\omega]\in \mathscr{R}_n-\mathscr{R}_{n-1}}
(\theta_{\calf/F,\mathfrak{p},\infty})_{[\omega]}\right) 
\end{equation}
hold for all $n\geqslant 0$ (with $\mathscr{R}_{-1}=\emptyset$).
\end{thm}

\begin{proof}
The first statement follows from Proposition \ref{p:CycExt}, noting that, by definition, for locally torsion $\L$-modules one has
\[\prod_{[\omega]\in \mathscr{R}_n} {\rm Ch}_{\L_{[\omega]}}(X_{[\omega]}) =
\prod_{[\omega]\in \mathscr{R}_n} (|X_{[\omega]}|)=(|X_n|).\]
Equality \eqref{e:WMC} is an immediate consequence.
\end{proof} 

\begin{ex} In the case $F=\F(t)$ (with $\F$ a finite field), one can choose $\infty$ as the place corresponding to the uniformizer $1/t$ and $sgn$ so that $sgn(1/t)=1$. It follows $A=\F[t]$ and $H_A^+=F$. In this setting, the only $sgn$-normalized rank $1$ Drinfeld module is the Carlitz module. The $\gotp$-cyclotomic Carlitz extension $\calf_\gotp/\F(t)$ is a close analogue of the $\Z_p$-cyclotomic extension of $\Q$ (for details see \cite[Chapter 12]{Ros}). The Main Conjecture for the $p$-part of the group of degree $0$ divisor classes was proved in \cite{ABBL} (see also \cite{BC} and \cite{BP} for generalizations) using (essentially) the same Stickelberger
series to provide the analytic counterpart of the characteristic ideal.
\end{ex}

\subsection{IMC for $\Z_p$-extensions} \label{ss:IMCZp}
We go back to the setting of Theorem \ref{t:StickEl+ClNum}.
In the particular case $\G\simeq\Z_p$ (with the filtration yielding $\Gamma_n\simeq\Z/p^n\Z$) we can be more precise with
the computation of $\Omega_{n,v_0}$ and $\Omega_{n,S}$. 

Let $n_0\in \N\cup\{+\infty\}$ be such that $v_0$ is totally split in 
$F_{n_0}/F$ (where $F_{+\infty}=\calf$) 
and inert in $\calf/F_{n_0}$. We use similar notations for
places $v\in S$, with $n_v^s,n_v^i\in \N\cup\{+\infty\}$ such that
$v$ is totally split in $F_{n_v^s}/F$, inert in $F_{n_v^i}/F_{n_v^s}$
and totally ramified in $\calf/F_{n_v^i}$.

For any $n$ and any $\sigma\in\Gamma_n$, let $ev_\sigma\colon\Gamma_n^\vee\rightarrow\boldsymbol\mu$ be the evaluation map $\chi\mapsto\chi(\sigma)$. There is an obvious exact sequence
\begin{equation} \label{e:exseqev} 1 \longrightarrow \Ker(ev_\sigma) \longrightarrow \Gamma_n^\vee  \longrightarrow \im(ev_\sigma)  \longrightarrow 1. \end{equation}

\subsubsection{The factor $\Omega_{n,v_0}$} Let $\sigma\in\Gamma_n$ be the image of $\Fr_{v_0}$. By the identity
$$\prod_{\zeta\in\im(ev_\sigma)}(1-\zeta x)=1-x^{|\im(ev_\sigma)|}$$
and \eqref{e:exseqev}, it follows
\begin{align*} \Omega_{n,v_0} & =\prod_{\chi\in \G_n^\vee} (1-\chi(v_0)q^{\deg v_0})\;\;=\;\;\left(\prod_{\zeta\in\im(ev_\sigma)}(1-\zeta q^{\deg v_0})\right)^{|\Ker(ev_\sigma)|}\\
 & =\big(1-q^{|\im(ev_\sigma)|\deg v_0}\big)^{|\Ker(ev_\sigma)|}
\end{align*}
Note that $\Ker(ev_\sigma)=\Gamma_n^\vee$ if $n\leqslant n_0$ and $|\Ker(ev_\sigma)|=p^{n_0}$ if $n>n_0$. Hence
\begin{equation} \label{e:vltOmg0} \Omega_{n,v_0} =
 \left\{\begin{array}{ll} \left(1-q^{\deg v_0}\right)^{p^n} & \text{ if } n\leqslant n_0,\\
 & \\
\left(1-q^{p^{n-n_0}\deg v_0}\right)^{p^{n_0}} &\text{ if } n> n_0\,. \end{array}\right.\end{equation}

\subsubsection{The factor $\Omega_{n,S}$} \label{sss:vltOmgS}
For $\chi\in\Gamma_n^\vee$ and $v\in S$, let
\[ a_{\chi,v}:=\left\{\begin{array}{ll} \deg v & \text{ if } v\in S_\chi,\\
 & \\
1-\chi(v) & \text{ if } v\notin S_\chi, \end{array}\right.\]
so to have
\[\Omega_{n,S}=\prod_{\chi\in \G_n^\vee} \left( 
 \prod_{v\in S_{\chi}} \deg v  \prod_{v\in S-S_\chi} (1-\chi(v)) \right)=\prod_{v\in S}\prod_{\chi\in \G_n^\vee}a_{\chi,v}. \]
Recall that $v\in S$ is split in $F_{n_v^s}/F$, inert in the $F_{n_v^i}/F_{n_v^s}$ and totally ramified above $F_{n_v^i}$. Hence, the $v$-factor $\prod_{\chi\in \G_n^\vee}a_{\chi,v}$ in $\Omega_{n,S}$ is 
$$(\deg v)^{p^n} \quad\quad \text{ for }n\leqslant n_v^s.$$ 

For $n_v^s<n\leqslant n_v^i$, let $\sigma\in\Gamma_n$ be the geometric Frobenius at $v$ and note that we have $\chi(v)=\chi(\sigma)$ for all $\chi\notin\Ker(ev_\sigma)$. Hence the equality
\[ \prod_{\chi\notin\Ker(ev_\sigma)}(1-\chi(\sigma)x)=\prod_{\chi\in\Gamma_n^\vee}(1-\chi(\sigma)x)\;\cdot\prod_{\chi\in\Ker(ev_\sigma)}(1-x)^{-1}=\left(\frac{1-x^{|\im(ev_\sigma)|}}{1-x}\right)^{|\Ker(ev_\sigma)|} \]
implies 
\[ \prod_{\chi\notin\Ker(ev_\sigma)}(1-\chi(v))=|\im(ev_\sigma)|^{|\Ker(ev_\sigma)|}. \]
Besides, $|\Ker(ev_\sigma)|=p^{n_v^s}$ and so $|\im(ev_\sigma)|=p^{n-n_v^s}$.
Therefore the $v$-factor in $\Omega_{n,S}$ is
\[ (\deg v)^{p^{n_v^s}}\cdot p^{(n-n_v^s)p^{n_v^s}} \quad\quad \text{ for }n_v^s<n\leqslant n_v^i .\]

Finally, for $n>n_v^i$ and $\chi\in\Gamma_n^\vee-\Gamma_{n_v^i}^\vee$, we have $a_{\chi,v}=1$. Hence the $v$-factor in $\Omega_{n,S}$ is
\[ (\deg v)^{p^{n_v^s}}\cdot p^{(n_v^i-n_v^s)p^{n_v^s}} 
\quad\quad \text{ for }n>n_v^i.\]

\subsubsection{Iwasawa Main Conjecture}
We keep $\calf/F$ a $\Z_p$-extension, $v_0$ and $S$ as above.

\begin{lem} \label{l:vomgbndd}
Assume that
\begin{enumerate}[{\rm 1.}]
\item $v_0$ does not split completely in $\calf/F$ or $v_\ell(1-q^{\deg v_0})=0$;
\item if $v\in S$, then $v$ does not split completely in $\calf/F$ 
or $\ell$ is coprime with $\deg v$.
\end{enumerate}
Then, $v_\ell(\Omega_{n,v_0}\Omega_{n,S})$ stabilizes at some (effectively computable) $\overline n$.
\end{lem}

\begin{proof}
If $n_0<+\infty$ and there exists $r\geqslant0$ such that $q^{p^r\deg v_0}\equiv1\pmod\ell$ it follows
\[  v_\ell(1-q^{p^r\deg v_0}) = v_\ell(1-q^{p^{r+m}\deg v_0}) 
\quad \text{for all }m\geqslant 0. \]
Thus, assumption 1 and \eqref{e:vltOmg0} imply that the $\ell$-adic 
valuation of $\Omega_{n,v_0}$ stabilizes at some $n=m_0$ 
which is effectively computable once $q$, $\ell$ and $v_0$ are given
(with $m_0=0$ when $n_0=+\infty$). 
As for $\Omega_{n,S}$, the computations of Section \ref{sss:vltOmgS} immediately show that each $v$-factor has bounded $\ell$-adic valuation under assumption 2, reaching the maximum at
\[ m_S:=\max\big\{n_v^s:v\in S,\ \deg v\in\ell\,\Z\big\} \]
(with $m_S:=0$ if $\ell$ never divides $\deg v$). To conclude, take $\overline n:=\max\{m_0,m_S\}$.
\end{proof}

\begin{thm}\label{t:MCArith}
Let $\calf/F$ be a $\Z_p$-extension.
Fix a nonempty finite $S$ of places of $F$ containing all the ramified ones and a place $v_0\notin S$.
Assume that\begin{enumerate}[{\rm 1.}]
\item $v_0$ does not split completely in $\calf/F$ or $v_\ell(1-q^{\deg v_0})=0$;
\item no place in $S$ splits completely;
\item $\ell$ is a generator of $(\Z/p\Z)^*$ and $v_p(\ell^{p-1}-1)=1$.
\end{enumerate}
Then, there exists an effectively computable $\overline n\in\N$ such  
\[ {\rm Ch}_{\L_{[\omega]}}(X_{[\omega]})= \left((\theta_{\calf/F,S,v_0})_{[\omega]}\right). \] 
for all $[\omega]\in\mathscr{R}-\mathscr{R}_{\overline n}$.
\end{thm} 

\begin{proof}
By Theorem \ref{t:StickEl+ClNum} we have an equality of ideals in $\Z_\ell$
\begin{equation} \label{e:rcllthmlmt} 
\left( \lim_{s\rightarrow 0}\, (1-q^{-s})^{1-z_n} 
\prod_{\chi\in \G_n^\vee} \chi(\Theta_{\calf/F,S,v_0})(q^{-s})\right) 
= \left(h(F_n)\cdot\frac{\Omega_{n,v_0}\Omega_{n,S}}{1-q}\right).  \end{equation}
The number of exceptional zeroes stabilizes at $\max_{v\in S}\{n_v^s\}$ (which is finite by assumption 2).
Moreover, by Lemma \ref{l:vomgbndd}, the $\ell$-adic valuation of 
$\frac{\Omega_{n,v_0}\Omega_{n,S}}{(1-q)}$ stabilizes at some (computable) 
$\overline{m}$. Hence, for all 
$n\geqslant \overline{n}:=\max\{ \overline{m},\,n_v^s\,:\,v\in S\}$ we obtain
\begin{align*}
\left(\frac{\Omega_{n,v_0}\Omega_{n,S}}{\Omega_{n-1,v_0}\Omega_{n-1,S}}\cdot \frac{h(F_n)}{h(F_{n-1})}\right) & =\left(\frac{h(F_n)}{h(F_{n-1})}\right)=
\left( \lim_{s\rightarrow 0}\, \prod_{\chi\in \G_n^\vee-\G_{n-1}^\vee} \chi(\Theta_{\calf/F,S,v_0})(q^{-s})\right) \\
 \ & = \left( \prod_{\chi\in \G_n^\vee-\G_{n-1}^\vee} \chi(\Theta_{\calf/F,S,v_0})(1)\right) \\
 \ & = \left( \prod_{[\omega]\in \mathscr{R}_n-\mathscr{R}_{n-1}} (\theta_{\calf/F,S,v_0})_{[\omega]})\right).
\end{align*}
As noted at the very end of Section \ref{s:lOrd}, the hypothesis on $\ell$ implies
$f_n(\ell,p)=\varphi(p^n)$ for all $n$. Therefore, since $\Gamma\simeq\Z_p$, each $\mathscr S_n=\mathscr R_n-\mathscr R_{n-1}$ consists of a single class. Let $[\omega]\in\mathscr{S}_n$, with $n\geqslant\overline n$. The theorem follows from
\[ {\rm Ch}_{\L_{[\omega]}}(X_{[\omega]}) = 
(|X_{[n]}|)=\left(\frac{|X_n|}{|X_{n-1}|}\right)
=\left(\frac{h(F_n)}{h(F_{n-1})}\right), \]
where the middle equality is justified by \eqref{e:ClGrSum}.
\end{proof}

\subsubsection{Special cases}
\begin{enumerate}[1.]
\item For a first example, we go back to the special case of the $\gotp$-cyclotomic extension. Recall that, assuming that
$p$ does not divide the class number of $A$, 
the unique ramified place in $\calf_\gotp^{(p)}/F$ is $\gotp$, which is totally ramified, while $\infty$ is totally split. 

\begin{cor}{\em \textbf{[IMC for $\Z_p$-subextensions of 
$\calf_\gotp$]}} \label{co:IMCZp} 
Let $\calf/F$ be a $\Z_p$-extension contained in $\calf_\gotp^{(p)}$.
Fix $S=\{\gotp\}$ and $v_0=\infty$. 
Assume that $p$ does not divide the class number of $A$,
$v_\ell(1-q^{\deg\infty})=0$, $\ell$ is a generator of 
$(\Z/p\Z)^*$ and $v_p(\ell^{p-1}-1)=1$.

\noindent
Then, for all $[\omega]\in \mathscr{R}-\mathscr{R}_{\overline n}$ we have
\[ {\rm Ch}_{\L_{[\omega]}}(X_{[\omega]})= \left((\theta_{\calf/F,\gotp,\infty})_{[\omega]}\right). \]
\end{cor}

\noindent
The condition $v_\ell(1-q^{\deg\infty})=0$ can be dropped by taking as $v_0$ any unramified place which does not split completely in $\calf/F$ (if such places exist).
\item Another classical example of $\Z_p$-extension for a global function field $F$ 
is the {\em arithmetic} extension $F^{ar}/F$ obtained by composing with 
the $\Z_p$-extension of the constant field $\mathbb{F}_F$. 
Such extension is disjoint from the $\gotp$-cyclotomic extension 
which is {\em geometric} (i.e., $F$ and $\calf_\gotp$ have the 
same field of constants).

\noindent
The arithmetic extension is unramified at all places. Since we required a nonempty set $S$, we choose arbitrary $v_0$ and $v_1$, putting
$S=\{v_1\}$. We also recall that no place is totally split in $F^{ar}/F$.

\begin{cor}{\em \textbf{[IMC for the arithmetic $\Z_p$-extension]}} \label{co:IMCZpArith} 
Let $F^{ar}/F$ be the arithmetic $\Z_p$-extension,
and fix $v_0$ and $S=\{v_1\}$ as above.
Assume $\ell$ is a generator of $(\Z/p\Z)^*$ and $v_p(\ell^{p-1}-1)=1$.

\noindent
Then, for all $[\omega]\in\mathscr R-\mathscr{R}_{\overline n}$ we have
\[ {\rm Ch}_{\L_{[\omega]}}(X_{[\omega]})= \left((\theta_{F^{ar}/F,v_1,v_0})_{[\omega]}\right)\,.\]
\end{cor}

\noindent
It is well known (see, e.g., \cite[Section II]{Wa75}) that the power
of $\ell$ dividing $h(F_n)$ is bounded in the arithmetic $\Z_p$-extension
$F^{ar}/F$. Therefore for all $n\gg 0$  and $\mathscr{R}_n=\{[\omega]\}$
we have $X_{[\omega]}=0$ and $(\theta_{F^{ar}/F,v_1,v_0})_{[\omega]}$ 
is a $\ell$-adic unit.  
\end{enumerate}

\begin{rem} \label{r:arbchrct}
In our discussion up to here we assumed that $F$ has characteristic $p$ only to have plenty of $\Z_p$-extensions. Actually, our construction of the Stickelberger element and Corollary \ref{co:IMCZpArith} work with no change in any positive characteristic.

The growth of class groups in arithmetic extension of function fields was studied in detail by Rosen (\cite{Ros1}; see also \cite[Chapter 11]{Ros}).
\end{rem}

\subsection{An Iwasawa Main Conjecture for class groups?} \label{sss:2stck} 
We go back to the setting of $\calf/F$ an abelian pro-$p$-extension with finite ramification contained in $S$. In light of the previous results, it seems reasonable to formulate an Iwasawa Main Conjecture of the following type:\medskip

\begin{itemize}
\item[{\bf [IMC]}] {\em there exists $\overline{n} \in \N$ such that for any $[\omega]\in \mathscr{R}-\mathscr{R}_{\overline{n}}$, we have
\begin{equation}\label{e:IMC} {\rm Ch}_{\L_{[\omega]}}(X_{[\omega]})=
\big((\theta_{\calf/F,S,v_0})_{[\omega]}\big) 
\end{equation}
as ideals in $\L_{[\omega]}$. } \end{itemize}
\medskip

\noindent We also expect similar relations for other Iwasawa modules once one is
able to provide the ``right'' $L$-functions for them.

\begin{rem} The computations of Section \ref{ss:IMCZp} show that one has to expect some irregularities for small $n$. This should be compared with the error term $\nu\in\Z$ in Iwasawa's formula \eqref{e:iwswfrm}. Actually, the main reason for the appearance of $\nu$ in \eqref{e:iwswfrm} is the fact that the structure of finitely generated torsion $\Z_p[[\Z_p]]$-modules is determined only up to pseudo-isomorphism: that is, up to pseudo-null kernels and cokernels (recall that a module is pseudo-null if its annihilator has height at least $2$).
Characteristic ideals are defined up to pseudo-isomorphisms:
hence pseudo-null modules do not appear in the Main Conjecture 
but they contribute to formula \eqref{e:iwswfrm}.

In our situation, pseudo-isomorphisms play no role: the ring $\Lambda$ has Krull dimension $1$ and thus the only pseudo-null module is the trivial one. Moreover, the discussion in Sections \ref{s:CharId} and \ref{SecSinMod} makes it clear that two $I_\bullet$-complete $\Lambda$-modules $M$ and $N$ are isomorphic if and only if so are their $[\omega]$-components, for all $[\omega]$. The additional precision, with respect to classical Iwasawa theory, that we gain in this is however balanced by the fact that the $[\omega]$-components are (in general) independent of each other. 
 \end{rem}

We can propose also an alternative approach to {\bf [IMC]}, based on Sections \ref{ss:CharDec} and \ref{ss:ExtConstMC}.
For any $\chi\in[\omega]$, let $X_{[\omega]}^{(\chi)}$ be the $\chi$-part of $X_{[\omega]}$,
as in Proposition \ref{p:cardecM}. Extending constants we can formulate a $\chi$-version
of the Main Conjecture as
\medskip

\begin{itemize}
\item[{\bf [$\boldsymbol{\chi}$-IMC]}] {\em there exists $\overline{n} \in \N$ such that for any $[\omega]\in \mathscr{R}-\mathscr{R}_{\overline{n}}$ 
and $\chi\in [\omega]$,  we have
\begin{equation}\label{e:chiIMC} 
{\rm Ch}_{\L_{[\omega]}\otimes_{\Z_\ell} \Z_\ell[\chi(\G)]}(X_{[\omega]}^{(\chi)})
= \left(e_\chi(\Theta_{\calf/F,S,v_0})(1)\right)
\end{equation}
as ideals in $\L_{[\omega]}\otimes_{\Z_\ell} \Z_\ell[\chi(\G)]$. } \end{itemize}
\medskip

It is easy to see that \eqref{e:IMC} and \eqref{e:chiIMC} are equivalent. Indeed, the algebraic sides are linked by \eqref{e:cardecid}, which yields
$$ {\rm Ch}_{\L_{[\omega]}}(X_{[\omega]})\otimes_{\Z_\ell} \Z_\ell[\omega(\G)]= 
\left({\rm Ch}_{\L_{[\omega]}\otimes_{\Z_\ell} \Z_\ell[\omega(\G)]}(X_{[\omega]}^{(\omega)})\right)^{|[\omega]|}.$$
As for the analytic sides, let $\chi$ and $\chi'$ be two elements in the class $[\omega]$. Then one has $\chi'=\sigma\cdot\chi$ for some $\sigma\in\Gal(\overline\Q_\ell/\Q_\ell)$ and therefore
\[e_{\chi'}(\Theta_{\calf/F,S,v_0})(1)=\sigma\big(e_\chi(\Theta_{\calf/F,S,v_0})(1)\big),\]
implying
\[\big((\theta_{\calf/F,S,v_0})_{[\omega]}\big)\otimes_{\Z_\ell}\Z_\ell[\omega(\G)]=\left(e_\omega(\Theta_{\calf/F,S,v_0})(1)\right)^{|[\omega]|}\]
as ideals generated by elements with the same $\ell$-adic valuation.

The formulation {\bf [IMC]} has a more familiar look for those working in Iwasawa theory, since it 
is expressed in terms of elements in the Iwasawa algebra. However, {\bf [$\boldsymbol{\chi}$-IMC]} might be more amenable to proof.

\end{document}